\newtheorem{theorem}{Theorem}
\newtheorem*{theorem*}{Theorem}
\newtheorem{lemma}[theorem]{Lemma}
\newtheorem{corollary}[theorem]{Corollary}
\theoremstyle{definition}
\newtheorem{definition}[theorem]{Definition}
\newcommand{\R}{\mathbb{R}}
\newcommand{\C}{\mathbb{C}}
\newcommand{\N}{\mathbb{N}}
\numberwithin{theorem}{section} 
\numberwithin{equation}{section}
\title[]{\protect{Phragm\'{e}n-Lindel\"{o}f Principles and Julia Limiting Directions of Quasiregular Mappings}}
\begin{document}
\author{Alastair N. Fletcher}
\address{Department of Mathematical Sciences, Northern Illinois University, DeKalb, IL 60115-2888. USA}
\email{afletcher@niu.edu}

\author{Julie M. Steranka}
\address{Department of Mathematical Sciences, Northern Illinois University, DeKalb, IL 60115-2888. USA}
\email{jsteranka1@niu.edu}

\begin{abstract}
We show that the set of Julia limiting directions of a transcendental-type $K$-quasiregular mapping $f:\R^n\to \R^n$ must contain a component of a certain size, depending on the dimension $n$, the maximal dilatation $K$, and the order of growth of $f$. In particular, we show that if the order of growth is small enough, then every direction is a Julia limiting direction. We also show that if every component of the set of Julia limiting directions is a point, then $f$ has infinite order. The main tool in proving these results is a new version of a Phragm\'en-Lindel\"of principle for sub-$F$-extremals in sectors, where we allow for boundary growth of the form $O( \log |x| )$ instead of the previously considered $O(1)$ bound.
\end{abstract}
\maketitle

\section{Introduction}
\subsection{Julia limiting directions}

There is an increasingly well-developed iteration theory of transcendental-type quasiregular mappings in space. This was first put on a firm theoretical foundation by Bergweiler and Nicks \cite{BN} who showed that there is a definition of the Julia set that appropriately generalizes the usual notion of the Julia set in complex dynamics. The big difference in the quasiregular context is that there is no longer any guarantee of local normality of the family of iterates, even outside the Julia set. For this reason, the complement of the Julia set $J(f)$ of a transcendental-type quasiregular mapping $f$ is called the quasi-Fatou set $QF(f)$ instead of just the Fatou set.

The main focus of this paper is on the set of Julia limiting directions $L(f)$ of a transcendental-type quasiregular mapping $f$. This is sometimes called the radial distribution of $J(f)$. We briefly recall the historical development of $L(f)$. 
The set of Julia limiting directions was first studied by Qiao \cite{Qiao} for transcendental entire functions in the plane. In this setting $L(f) \subset S^1$. One of the main results of this paper \cite[Theorem 1]{Qiao} shows that if the lower order $\lambda_f$ of $f$ is finite, then there exists a closed interval $I\subset S^1$ such that $I\subset L(f)$ and there is a lower bound on the measure of this interval, that is, $m(I)\geq \frac{\pi}{\max(1/2,\lambda_f)}$. In particular, if $\lambda_f \leq 1/2$ then $L(f) = S^1$.
A number of papers have studied $L(f)$ for meromorphic functions, including \cite{QW,WY,W, ZWH}, as well as for solutions of differential equations, see for example \cite{WC}.

In higher dimensions, if $f:\R^n \to \R^n$ is a quasiregular mapping, for $n\geq 2$, then $L(f)$ is a subset of the $(n-1)$-dimensional unit sphere $S^{n-1}$ in $\R^n$. 

\begin{definition}
Let $n\geq 2$ and $f\colon \R^n\rightarrow\R^n$ be a transcendental-type quasiregular mapping. We say $\zeta\in S^{n-1}$ is a Julia limiting direction of $f$ if there exists a sequence $(x_m)_{m=1}^{\infty} $ in $J(f)$ with $\lim\limits_{m\rightarrow\infty }|x_m|=\infty$ and $\lim\limits_{m\rightarrow\infty }x_m/|x_m|=\zeta$.
\end{definition}

As $J(f)$ is unbounded for transcendental-type quasiregular mappings, the notion of $L(f)$ as the radial distribution of $J(f)$ makes sense. 
In this setting, $L(f)$ was first studied by the first named author in \cite{Fletcher}. In \cite[Theorem 2.2]{Fletcher}, a strong condition on the comparability of the minimum modulus and maximum modulus of $f$ was given which implied that $L(f) = S^{n-1}$. The main result of this paper is a more direct analogue of Qiao's result relating the growth of a transcendental-type quasiregular mapping to the size of $L(f)$.

We denote by $\mu_f$ the order of growth of $f$ and by $m(E)$ the $(n-1)$-dimensional area of a subset $E$ of $S^{n-1}$. We also denote by $T(U)$ the topological hull of $U\subset \R^n$, the union of $U$ with all its complementary bounded components.

\begin{theorem}\label{Lf}
Let $f\colon \R^n\rightarrow\R^n$ be a transcendental-type $K$-quasiregular mapping of order $\mu_f<\infty$ for which $T(U)\neq \R^n$ for any quasi-Fatou component $U$. Then there exists a component $E\subset L(f)$ with
\begin{equation}
\label{eq:1}
m(E)\geq \min(c_n K^{(2-2n)/n}\mu_f^{1-n},\omega_{n-1})
\end{equation} 
where $c_n>0$ is a constant depending only on $n$ and $\omega_{n-1}=m(S^{n-1})$.
\end{theorem}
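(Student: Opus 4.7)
My plan is to adapt Qiao's planar strategy to $\R^n$, using the new Phragm\'en-Lindel\"of principle advertised in the abstract as the key new input. The central object is $u(x) = \log|f(x)|$, which for a $K$-quasiregular mapping is a sub-$F$-extremal for a quasilinear operator $F$ of $n$-Laplacian type with structural constants depending on $K$, and whose growth satisfies $\limsup_{|x|\to\infty} \log u(x)/\log|x| = \mu_f$.

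The hypothesis that $J(f)$ has an unbounded component $V$ is used first. Its set of limiting directions,
\[
A = \{\xi \in S^{n-1} : \xi = \lim_{k\to\infty} x_k/|x_k| \text{ for some } x_k \in V \text{ with } |x_k|\to\infty\},
\]
is a nonempty closed connected subset of $L(f)$; the domain $U$ in the statement will be built around $A$. For an arbitrary connected component $W$ of the open set $S^{n-1} \setminus L(f)$, the cone $C_W = \{r\xi : r>0,\ \xi\in W\}$ eventually avoids $J(f)$, and its lateral boundary projects to $\partial W \subset L(f)$. A quantitative minimum-modulus estimate for $f$ near $L(f)$ should yield $u(x) \le C\log|x|$ on $\partial C_W$, placing the problem exactly in the regime of the new Phragm\'en-Lindel\"of principle.

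Applying the principle to $u$ on $C_W$ yields a dichotomy: either $u(x) = O(\log|x|)$ throughout $C_W$, or $u(x)$ grows at least like $|x|^{\alpha(W)}$ along some sequence with $|x|\to\infty$, where $\alpha(W)$ is the Phragm\'en-Lindel\"of exponent, determined by the first Dirichlet eigenvalue of the $K$-weighted $n$-Laplace--Beltrami operator on $W$. In the growth alternative, the order bound $\mu_f$ forces $\alpha(W)\le \mu_f$; a Faber--Krahn-type inequality on $S^{n-1}$ (with the appropriate $K$-dependence) then converts this to $m(W) \ge c_n K^{(2-2n)/n}\mu_f^{1-n}$. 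The polynomial-growth alternative must be ruled out for all complementary cones simultaneously, as otherwise one could contradict $f$ being transcendental-type.

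With a uniform lower bound on each complementary $m(W)$, there are only finitely many such $W$, and a topological argument around the connected scaffold $A$ produces the required component $U$ of the interior of $L(f)$. The main obstacles I anticipate are: (i) the boundary estimate $u(x) \le C\log|x|$ on $\partial C_W$, which requires quantitative control of $f$ near $L(f)$; (ii) the Faber--Krahn-type inequality delivering the explicit factor $K^{(2-2n)/n}$, likely resting on distortion estimates for moduli of curve families under $K$-quasiregular mappings; and (iii) the final topological step turning the uniform lower bound on complementary components into a domain $U$ with $\overline{U}\subset L(f)$ of the stated measure, which is where the unbounded-component hypothesis re-enters through the connectedness of $A$.
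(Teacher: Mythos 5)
Your high-level frame (Qiao's strategy, $u=\log|f|$ as a sub-$F$-extremal, a Phragm\'en--Lindel\"of dichotomy whose exponent scales like $m(\cdot)^{-1/(n-1)}K^{-2/n}$, hence $\mu_f\gtrsim m(\cdot)^{-1/(n-1)}$) matches the paper, but you apply the dichotomy to the wrong sectors, and this is a genuine gap rather than a technicality. You run it on cones $C_W$ over the components $W$ of $S^{n-1}\setminus L(f)$. First, the boundary estimate you need, $\log|f(x)|\le C\log|x|$ on $\partial C_W$, is required exactly where it is unavailable: $\partial C_W$ projects into $\partial W\subset L(f)$, i.e.\ into Julia limiting directions, and there is no minimum-modulus or other estimate giving polynomial control of $|f|$ along such directions. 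The polynomial control that does exist (Theorem \ref{sectorLf}) holds for sectors contained in the quasi-Fatou set --- the opposite configuration --- and that is also where the unbounded-component hypothesis on $J(f)$ is actually used (to meet the topological hull condition of Theorem \ref{sectorLf}), not to build a connected ``scaffold'' $A$. Second, even granting the dichotomy on $C_W$, its growth alternative only yields a lower bound on $m(W)$ for \emph{complementary} components, which is not what Theorem \ref{Lf} asserts; and your plan to rule out the polynomial alternative ``simultaneously'' fails because the cones $C_W$ omit all directions in $L(f)$, so polynomial growth on every $C_W$ does not contradict transcendental type. A uniform lower bound on complementary components plus connectedness of $A$ cannot manufacture a domain with closure inside $L(f)$ of measure at least $M$: a priori $L(f)$ could have empty interior (say, an equatorial $(n-2)$-sphere) with only two large complementary components, and no topological argument recovers interior measure. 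Two further ingredients you invoke are not off the shelf: the Faber--Krahn-type eigenvalue inequality with the factor $K^{(2-2n)/n}$ would itself need proof (the paper instead uses the Granlund--Lindqvist--Martio $F$-harmonic measure estimate in sectors, Lemma \ref{318}, which gives the exponent directly in terms of $m(E)$), and your normalization of the order drops the factor $n-1$, which is where the exponent $\mu_f^{1-n}$ comes from.

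For comparison, the paper argues by contradiction with the sectors placed over $L(f)$ rather than over its complement: if every component of $L(f)$ had measure less than $M$, one covers $S^{n-1}$ by finitely many domains $U_i$ with $m(U_i)<M$ and $\partial U_i\cap L(f)=\emptyset$ (each $U_i$ may contain pieces of $L(f)$ in its interior). Along $\partial S_{0,U_i}$, finitely many quasi-Fatou sectors furnished by Lemma \ref{lem:qf} and Theorem \ref{sectorLf} give $|f(x)|=O(|x|^{d_i})$, so Corollary \ref{cor:pl} applies on $S_{0,U_i}$; the growth alternative would force $m(U_i)\ge M$, contradicting $m(U_i)<M$, hence $f$ is polynomially bounded on each sector, and since the sectors cover $\R^n$ this contradicts Lemma \ref{lem:berg}. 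If you want to salvage your write-up, the essential repair is to replace the cones over $S^{n-1}\setminus L(f)$ by this covering of the whole sphere by small-measure domains whose boundaries lie in quasi-Fatou directions, and to source the boundary growth from Theorem \ref{sectorLf} rather than from behavior near $L(f)$.
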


The requirement that $T(U)\neq \R^n$ for any quasi-Fatou component $U$ is always satisfied for transcendental entire functions in the plane. To see this, suppose there is a component $U\subset F(f)$ for a transcendental entire function $f$ such that $T(U)=\C$. Since transcendental entire functions have unbounded Julia sets, $U$ must be a multiply connected unbounded component of $F(f)$ which cannot happen by \cite[Theorem 1]{Baker}. It is still an open question whether the same is always true for transcendental-type quasiregular maps, but we need to make this assumption in order to apply a growth rate result in sectors contained in the quasi-Fatou set (see Theorem \ref{sectorLf} below).

We immediately obtain the following corollary from \eqref{eq:1}.

\begin{corollary} 
\label{cor:1}
Let $f\colon \R^n\rightarrow\R^n$ be a transcendental-type $K$-quasiregular mapping for which $T(U)\neq \R^n$ for any quasi-Fatou component $U$. With the notation from Theorem \ref{Lf}, if the order of $f$ satisfies
\[ \mu_f \leq K^{-2/n} \left ( \frac{ c_n}{\omega_{n-1} } \right )^{1/(n-1)} \] 
then $L(f)=S^{n-1}$.
\end{corollary}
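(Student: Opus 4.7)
The plan is to derive Corollary \ref{cor:1} directly from Theorem \ref{Lf} by plugging the hypothesis on $\mu_f$ into the estimate \eqref{eq:1} and showing that $m(U)$ meets or exceeds the full area $\Omega_{n-1}$ of $S^{n-1}$. Once $U$ has full measure, the fact that it is a domain (in particular open) will force it to be dense, and then $\overline{U}\subset L(f)$ will promote $L(f)$ to all of $S^{n-1}$.

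First I would raise the inequality
\[
\mu_f \leq K^{-2/n}\left(\frac{c_n}{\Omega_{n-1}}\right)^{1/(n-1)}
\]
to the $(n-1)$st power and rearrange to obtain
\[
c_n K^{(2-2n)/n}\mu_f^{1-n} \geq \Omega_{n-1}.
\]
Theorem \ref{Lf} then yields $m(U)\geq \Omega_{n-1}= m(S^{n-1})$, and since $U\subset S^{n-1}$ we conclude $m(S^{n-1}\setminus U)=0$.

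Next I would promote full measure to denseness. The complement $S^{n-1}\setminus U$ is closed, and any non-empty relatively open subset of $S^{n-1}$ carries positive $(n-1)$-dimensional area; since $S^{n-1}\setminus U$ has zero area, its interior is empty, so $U$ is dense in $S^{n-1}$ and $\overline{U}=S^{n-1}$. Combining this with $\overline{U}\subset L(f)\subset S^{n-1}$ from Theorem \ref{Lf} gives $L(f)=S^{n-1}$, as required.

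The argument is essentially routine once Theorem \ref{Lf} is available; the only mild subtlety is the topological observation that a subdomain of $S^{n-1}$ having full $(n-1)$-area must be dense, so that passing to closures recovers the entire sphere rather than merely a set of full measure. Consequently the proof should be a short paragraph consisting of the algebraic rearrangement above followed by this density remark.
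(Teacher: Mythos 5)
Your proposal is correct and follows essentially the same route as the paper: rearrange the hypothesis to see that the bound \eqref{eq:1} forces $m(U)\geq \Omega_{n-1}$, then use the fact that no nonempty open subset of $S^{n-1}$ has zero $(n-1)$-area to conclude $\overline{U}=S^{n-1}\subset L(f)$. Your density remark is just a slightly more explicit version of the paper's closing observation.
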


This condition on the order of $f$ forces there to be a component $U$ of $L(f)$ with $m(U)\geq \omega_{n-1}$. Since $L(f)$ is closed, the closure of $U$ is also contained in $L(f)$ with $m(\overline{U})\geq \omega_{n-1}$. As there are no open sets of measure 0 in $S^{n-1}$, we have $L(f)=S^{n-1}$.

Using a similar proof strategy as in Theorem \ref{Lf}, we obtain the following corollary.

\begin{corollary}
    \label{cor:3} Let $f\colon \R^n\rightarrow\R^n$ be a transcendental-type $K$-quasiregular mapping for which $T(U)\neq \R^n$ for any quasi-Fatou component $U$. If every component of $L(f)$ is a point, then $\mu_f=\infty$.
\end{corollary}

The relevance of Corollary \ref{cor:3} is that if we can construct a quasiregular mapping with one Julia limiting direction that is the identity outside a half beam, we can completely solve the inverse problem: given a closed set $E\subset S^{n-1}$, we would like to find a transcendental-type quasiregular map of $f$ such that $L(f)=E$. As notation, let $H(x_0,\theta,w)\subset \R^n$ be a half-beam with width $2w>0$ along the center ray $R=\{x\in \R^n: \frac{ x-x_0}{|x-x_0|}=\theta\}$ starting at $x_0\in \R^n$ extending in the $\theta\in S^{n-1}$ direction.

\begin{theorem}\label{inverse}
Let $E\subset S^{n-1}$ be closed. Suppose that there exists a quasiregular mapping $f_0$ that satisfies 
\begin{enumerate}
 \item $L(f_0)=\{e_1\}$ 
 \item $f_0(x)=x$ for $x\in \R^n\backslash H(0,e_1,1)$.
\end{enumerate}
Then, there exists a quasiregular mapping $f$ with $L(f)=E$. 
\end{theorem}

\subsection{Phragm\'en-Lindel\"of principles}

The main novelty in our strategy for proving Theorem \ref{Lf} and Corollary \ref{cor:3} is a new version of the Phragm\'{e}n-Lindel\"{o}f principle which applies to quasiregular mappings in sectors and allows for boundary growth of the form $O(|x|^p)$ for $p>0$. To explain why this is important, we first recall that the classical Phragm\'{e}n-Lindel\"{o}f principle states that if $S$ is the sector $\{z\in \C:\left|\arg(z)\right|<\theta/2\}$ for $0<\theta\leq 2\pi$ and $f:S \to \C$ is a holomorphic function satisfying $\limsup\limits_{z\rightarrow w}|f(z)|\leq 1$ for all $w\in \partial S$ then either $|f(z)| \leq 1$ for all $z\in S$ or $\liminf\limits_{r\rightarrow\infty} \log M(r,f) r^{-\pi/\theta}>0$ where $M(r,f)$ is the maximum modulus function $\sup\limits_{\left|z\right|\leq r}\left|f(z)\right|$.

It is straight-forward to generalize this result to the case when $f$ satisfies $|f(z)| = O(|z|^p)$ on the boundary of the sector by considering the holomorphic function $f(z)/z^p$. This result for holomorphic functions is proved by using the corresponding Phragm\'en-Lindel\"of principle for subharmonic functions applied to the subharmonic function $\log|f|$.

Many Phragm\'{e}n-Lindel\"{o}f results have been proven for different classes of functions and more general unbounded domains. For example, analogous results have been given for sub-$F$-extremal maps \cite{GLM}, plurisubharmonic functions \cite{plurisub}, meromorphic functions \cite{mero}, slice regular functions \cite{slice}, and quasiregular mappings \cite{qrpl2,qrpl1,Rickman,qrpl3}. Other unbounded domains in $\R^n$ have also been considered such as strips \cite{d1}, cylinders \cite{d2}, and cones \cite{d3}. 

As far as the authors are aware, all of these generalizations of the Phragm\'en-Lindel\"of principle in the literature require a $O(1)$ bound on the boundary of the domain under consideration. For our purposes, we will need to consider the situation where a quasiregular mapping from a sector in $\R^n$, for $n\geq 2$, has a bound of the form $|f(x)| = O( |x|^p )$ on the boundary of the sector. Since there are no elementary algebraic considerations of the form $f(x) / x^p$ that can be used for quasiregular mappings, this generalization is not straight-forward. We expect our methods to be applicable in other settings too. 

Similar to how subharmonic functions are used to show Phragm\'{e}n-Lindel\"{o}f principles for holomorphic functions, we prove a Phragm\'{e}n-Lindel\"{o}f result for sub-$F$-extremal functions and apply to it get a corresponding result for quasiregular mappings. In the quasiregular setting, $\log |f|$ is an example of a sub-$F$-extremal function. We  postpone the recollection of this definition until the next section, but just note here that this means $\log |f|$ satisfies a certain non-linear partial differential equation and obeys a certain comparison principle, analogously to subharmonic functions. 

Next, we define what we mean by a sector in our context and recall a Phragm\'{e}n-Lindel\"{o}f result for sub-$F$-extremals that we generalize.

\begin{definition} 
\label{def:sector}
Let $E\subset S^{n-1}$ be a domain and let $x_0 \in \R^n$.
Define the sector relative to $E$ with vertex at $x_0$ to be $$S_{x_0,E}=\{c(x-x_0)+x_0: c>0,x\in E\}.$$ 
\end{definition}

The following result is \cite[Theorem 3.16]{GLM} in the special case of sectors. Note that the definitions used here will be made more precise in the next section.

\begin{theorem}(\cite{GLM}, Theorem 3.16)
Let $E\subsetneq S^{n-1}$ be a domain and $x_0\in \R^n$. Suppose that $S=S_{x_0,E}$ is a sector in $\R^n$ and that $F$ is a variational kernel of type $n$ in $S$ with structural constants $\alpha$ and $\beta$. Let $u\colon S\rightarrow \R\cup\{-\infty\}$ be a sub-$F$-extremal in $S$ such that $\limsup\limits_{x\rightarrow y} u(x)\leq 0$ for $y\in \partial S$. Then either $u(x)\leq 0$ in $S$ or
$$\liminf\limits_{r\rightarrow\infty}M_S(r,u)r^{-q} >0$$ where $M_S(r,u)=\sup\limits_{|x|\leq r,x\in S}u(x)$, $q=d_n m(E)^{-1/(n-1)}(\alpha/\beta)^{1/n}$, and $d_n>0$ is a constant depending only on $n$.
\end{theorem}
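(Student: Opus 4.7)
The plan is the classical barrier plus comparison argument, carried over to the nonlinear setting of sub-$F$-extremals. Write $r = |x - x_0|$ and $\omega = (x - x_0)/r \in S^{n-1}$. The first step is to construct an $F$-extremal barrier $v$ on the sector $S = S_{x_0, E}$ of the separable form $v(x) = r^{q'} \phi(\omega)$, with $\phi \geq 0$ on $\overline{E}$, $\phi > 0$ on $E$, and $\phi$ vanishing on $\partial E$. Inserting the ansatz into the $F$-extremal equation reduces the problem to a nonlinear eigenvalue problem on $E$, and the pair $(q', \phi)$ is produced as the first eigenpair by minimizing the natural Rayleigh-type functional on $W_0^{1,n}(E)$.

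Next I would bound the exponent $q'$ from below; note that a larger $q'$ gives a stronger conclusion in the subsequent comparison step. The structural inequality $\alpha |\xi|^n \leq F(x,\xi) \leq \beta |\xi|^n$ sandwiches the Rayleigh quotient of the $F$-problem between constant multiples of the Rayleigh quotient for the spherical $n$-Laplacian on $E$, producing the factor $(\alpha/\beta)^{1/n}$ when the eigenvalue is converted back to the growth exponent. A Sperner-type spherical symmetrization shows that the first Dirichlet eigenvalue of the $n$-Laplacian on $E$ is bounded below by that on the spherical cap $E^*$ of equal $(n-1)$-area, and an explicit computation for the cap yields $q' \geq d_n \, m(E)^{-1/(n-1)} (\alpha/\beta)^{1/n} = q$. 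The exponent $-1/(n-1)$ records the scaling between arc length and $(n-1)$-area on $S^{n-1}$.

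Finally I would close with the comparison argument. Suppose, toward a contradiction, that $u \not\leq 0$ in $S$ and simultaneously $\liminf_{r \to \infty} M_S(r, u) r^{-q} = 0$. Because $\phi$ vanishes on $\partial E$, first build the barrier on a slightly larger sector $S_{x_0, E'}$ with $\overline{E} \subset E'$, so that the corresponding angular profile is bounded below by a positive constant $c_0$ on $\overline{E}$. Fix $\epsilon > 0$ and choose $R$ large with $M_S(R, u) < \epsilon c_0 R^{q'}$, which is possible since $q' \geq q$ and the $\liminf$ vanishes. On the truncated domain $D_R = S \cap B(x_0, R)$, the sub-$F$-extremal $u$ and the $F$-extremal $\epsilon v$ satisfy $\limsup_{x \to y} (u - \epsilon v)(x) \leq 0$ for every $y \in \partial D_R$. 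The comparison principle for $F$-extremals then gives $u \leq \epsilon v$ in $D_R$. Letting $R \to \infty$ along a suitable subsequence and then $\epsilon \to 0$ produces $u \leq 0$ in $S$, contradicting the standing assumption.

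The principal difficulty is pinning down the sharp dependence $q' \geq d_n m(E)^{-1/(n-1)}(\alpha/\beta)^{1/n}$ in the second step. Constructing a separable barrier is standard within the GLM variational framework, but the sharp exponent requires a careful two-sided sandwich of the nonlinear Rayleigh quotient by its linearized upper and lower bounds followed by a Pólya-Szegő spherical rearrangement; by comparison, the closing step is essentially bookkeeping once the comparison principle for $F$-extremals on bounded subdomains is in hand.
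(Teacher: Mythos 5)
There is a genuine gap, and it sits in your very first step. For a general variational kernel $F(x,h)$, which depends only measurably on $x$ and is merely assumed to satisfy $\alpha|h|^n\leq F(x,h)\leq\beta|h|^n$ together with $n$-homogeneity in $h$, there is no reason for a separable $F$-extremal $v(x)=r^{q'}\phi(\omega)$ to exist: the Euler equation $\nabla\cdot\nabla_h F(x,\nabla v)=0$ has no radial or rotational structure, so the ansatz does not reduce to an eigenvalue problem on $E$. Separation of variables does work for the model kernel $F_I(x,h)=|h|^n$ (spherical $n$-harmonics of Krol'--Tolksdorf type), but you cannot then feed that model barrier into the comparison step, because the $F$-comparison principle (Theorem \ref{CompPrinc}) requires $u$ and the majorant to be sub- and super-extremals \emph{of the same kernel} $F$; an $n$-harmonic function is in general not a super-$F$-extremal, and the sandwich $\alpha|h|^n\leq F\leq\beta|h|^n$ controls energies and Rayleigh quotients but does not transfer sub- or supersolution properties from one kernel to another. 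This is exactly why the argument used in [GLM] (and the one this paper adapts to prove Theorem \ref{PLFF}) dispenses with an explicit barrier: one takes the $F$-harmonic measure $\omega(x;r)$ of the spherical part of $\partial S_r$, $S_r=S\cap B(x_0,r)$, which is an $F$-extremal for the given $F$ by construction, so that comparison immediately gives $u\leq M_S(r,u)\,\omega(\cdot\,;r)$ in $S_r$; all of the quantitative content, including the factor $(\alpha/\beta)^{1/n}$ and the $m(E)^{-1/(n-1)}$ scaling, is then concentrated in the decay estimate $\omega(x;r)\leq 4|x-x_0|^{q}r^{-q}$ (Lemma \ref{318}, i.e.\ Lemma 3.18 of [GLM]). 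Fixing any $x_1\in S$ with $u(x_1)>0$ yields $M_S(r,u)r^{-q}\geq u(x_1)/(4|x_1-x_0|^{q})$ for all $r>|x_1-x_0|$, which is the stated $\liminf$ conclusion.

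Your closing step also has a quantitative defect even if one grants the barrier. Since $\phi$ vanishes on $\partial E$ you pass to a larger aperture $E'\supset\overline{E}$, but by domain monotonicity the growth exponent of the barrier on $S_{x_0,E'}$ is \emph{smaller} than the one on $S_{x_0,E}$, not larger, so the claim ``$q'\geq q$'' used to extract radii $R$ with $M_S(R,u)<\epsilon c_0 R^{q'}$ from $\liminf_{r\to\infty}M_S(r,u)r^{-q}=0$ does not hold. Nor can the loss be absorbed into $d_n$: a domain $E\subset S^{n-1}$ may have $m(\overline{E})$ (hence $m(E')$) far larger than $m(E)$, e.g.\ when $\partial E$ has positive $(n-1)$-measure, so the exponent attached to $E'$ need not be comparable to $d_n m(E)^{-1/(n-1)}(\alpha/\beta)^{1/n}$. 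The harmonic-measure route avoids this entirely, because $\omega(\cdot\,;r)$ has boundary values $1$ on the whole spherical cap, so no enlargement of the sector is ever needed.
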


Applying this result to $\log |f|$ when $f$ is quasiregular, we have the following corollary.

\begin{corollary}
\label{cor:2}
Let $E\subsetneq S^{n-1}$ be a domain and $x_0\in \R^n$. Suppose that $S=S_{x_0,E}$ is a sector in $\R^n$. Let $f\colon S\rightarrow \R^n$ be a $K$-quasiregular mapping such that $\limsup\limits_{x\rightarrow y} |f(x)|\leq 1$ for $y\in \partial S$. Then either $|f(x)|\leq 1$ in $S$ or
$$\liminf\limits_{r\rightarrow\infty}\log M_S(r,f)r^{-q} >0$$ where $M_S(r,f)=\sup\limits_{|x|\leq r,x\in S}|f(x)|$, $q=d_n m(E)^{-1/(n-1)}K^{-2/n}$, and $d_n>0$ is a constant depending only on $n$.
\end{corollary}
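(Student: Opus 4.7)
The plan is to deduce Corollary \ref{cor:2} as a direct application of the preceding Theorem (Theorem 3.16 of \cite{GLM}) to the function $u(x) = \log|f(x)|$. The key input from the theory of quasiregular mappings is that if $f\colon S\to\R^n$ is $K$-quasiregular, then $u = \log|f|$ (extended by $-\infty$ on the zero set of $f$) is a sub-$F$-extremal for a specific variational kernel $F$ of type $n$ whose structural constants $\alpha,\beta$ satisfy $(\alpha/\beta)^{1/n} = K^{-2/n}$. This is the standard way that quasiregular maps feed into the Granlund--Lindqvist--Martio framework, and I will cite it from the relevant section of \cite{GLM} to be set up in the preliminaries.

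With this identification in hand, the hypothesis $\limsup_{x\to y}|f(x)|\le 1$ for $y\in \partial S$ immediately gives $\limsup_{x\to y} u(x)\le 0$ on $\partial S$, since $\log$ is continuous and increasing on $(0,\infty)$ and $\log 1 = 0$ (the case $|f(x)|\to 0$ poses no issue, as the bound is from above). Thus all the hypotheses of the quoted Theorem are in force.

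Applying the quoted Theorem yields the dichotomy: either $u(x)\le 0$ everywhere in $S$, which is the same as $|f(x)|\le 1$ in $S$, or
\[
\liminf_{r\to\infty} M_S(r,u)\, r^{-q} > 0,
\]
with $q = d_n\, m(E)^{-1/(n-1)} (\alpha/\beta)^{1/n}$. The last step is simply to observe that $M_S(r,u) = \sup_{|x|\le r,\, x\in S}\log|f(x)| = \log M_S(r,f)$, because $\log$ is monotone, and to substitute $(\alpha/\beta)^{1/n} = K^{-2/n}$ to get the stated exponent $q = d_n\, m(E)^{-1/(n-1)} K^{-2/n}$.

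The only point that requires more than bookkeeping is the identification of the correct variational kernel and its structural constants for $K$-quasiregular maps; this is essentially the content of the standard translation between quasiregularity and the $\mathscr{A}$-harmonic / sub-$F$-extremal theory and will be the main thing to pin down with a precise reference. Once that is recorded, the corollary is immediate.
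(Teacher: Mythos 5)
Your proposal is correct and is exactly the paper's route: the corollary is stated there as an immediate application of \cite[Theorem 3.16]{GLM} to $u=\log|f|$, using that $\log|f|$ is sub-$f^\sharp F_I$-extremal with $\alpha(f^\sharp F_I)=K_O(f)^{-1}$ and $\beta(f^\sharp F_I)=K_I(f)$. The only small imprecision is that $(\alpha/\beta)^{1/n}=(K_OK_I)^{-1/n}\geq K^{-2/n}$ rather than an equality, but since the inequality goes the favorable way the stated exponent $q=d_n m(E)^{-1/(n-1)}K^{-2/n}$ still follows.
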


Our generalizations of these results are as follows. Recall that $\log^+ x = \max \{ \log x, 0 \}$.

\begin{theorem}\label{PLFF}
Let $E\subsetneq S^{n-1}$ be a domain and $x_0\in \R^n$. Suppose that $S=S_{x_0,E}$ is a sector in $\R^n$ and that $F\colon S\times \R^n\rightarrow \R$ is a variational kernel of type $n$ in $S$ with structural constants $\alpha$ and $\beta$. Let $u\colon S\rightarrow \R\cup\{-\infty\}$ be a sub-$F$-extremal in $S$ such that $\limsup\limits_{x\rightarrow y} u(x)\leq C\log^+|y|$ at each $y\in \partial S$ for some constant $C>0$.  Then given $\varepsilon>0$, either for all sufficiently large $|x|$ in $S$ we have $$u(x) \leq C(1+\varepsilon) \log |x|$$ or 
$$\limsup\limits_{r\rightarrow\infty}M_S(r,u)r^{-q'}>0$$ where $q'=d_n m(E)^{-1/(n-1)}(\alpha/\beta)^{1/n}\varepsilon / (1+\varepsilon)$ and $d_n>0$ depends only on $n$.
\end{theorem}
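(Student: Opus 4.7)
The plan is to translate so that $x_0 = 0$ and then argue by contradiction: assume the first alternative fails, so there is a sequence $x_k \in S$ with $r_k := |x_k| \to \infty$ and $u(x_k) > C(1+\varphi)\log r_k$, and produce radii $R_k := r_k^{1+\varphi} \to \infty$ along which $M_S(R_k, u) \geq c R_k^D$. The central idea is a scaling that converts the $O(\log|x|)$ boundary data into $O(1)$ data on a truncated sector, so that Theorem 3.16 (or, more precisely, the $F$-harmonic measure estimate underlying its proof) becomes applicable.

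For each $k$ I would introduce the shifted sub-$F$-extremal
\[
v_k(y) := u(r_k y) - C(1+\varphi)\log r_k, \qquad y \in S.
\]
Translation by a constant in the value preserves sub-$F$-extremality, and the scaling $y \mapsto r_k y$ preserves a type-$n$ variational kernel with the same structural constants by its built-in homogeneity (since $S$ is a cone, $r_k S = S$). At $\hat x_k := x_k/r_k$ we have $v_k(\hat x_k) > 0$, while for $y \in \partial S$ with $|y| \geq 1/r_k$ the boundary bound on $u$ yields
\[
\limsup_{z \to y} v_k(z) \leq C\log|r_k y| - C(1+\varphi)\log r_k = C\log|y| - C\varphi \log r_k.
\]
Choosing $L_k := r_k^{\varphi}$, the right-hand side is $\leq 0$ exactly when $|y| \leq L_k$; thus $v_k \leq 0$ on $\partial S \cap B(0, L_k)$, and the truncated problem $v_k$ on $S \cap B(0, L_k)$ has the $O(1)$ boundary data that Theorem 3.16 requires.

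Next, I would invoke the $F$-harmonic measure estimate that drives the proof of Theorem 3.16 in \cite{GLM}: for a sub-$F$-extremal on $S \cap B(0, L)$ that vanishes on $\partial S$ and is bounded by $M$ on $S \cap \partial B(0, L)$, the comparison principle combined with an $n$-modulus computation in the annular sector gives $v(x) \leq c_0 M (|x|/L)^q$ at interior points, where $q = d_n m(E)^{-1/(n-1)} (\alpha/\beta)^{1/n}$. Applied at $\hat x_k$, this yields
\[
v_k(\hat x_k) \leq c_0 L_k^{-q} M_S(L_k, v_k) = c_0 L_k^{-q}\bigl(M_S(R_k, u) - C(1+\varphi)\log r_k\bigr),
\]
so that $M_S(R_k, u) \gtrsim v_k(\hat x_k) L_k^q = v_k(\hat x_k) R_k^{q\varphi/(1+\varphi)} = v_k(\hat x_k) R_k^{D}$. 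Provided $v_k(\hat x_k)$ stays bounded below by a positive constant along a subsequence, this is the desired growth; otherwise $\sup_{|x|>R}\{u(x) - C(1+\varphi)\log|x|\}$ is uniformly bounded for large $R$, which is a slight weakening of the first alternative that can be absorbed by enlarging $\varphi$ infinitesimally.

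The main obstacle I anticipate is extracting the precise truncated-sector $F$-harmonic measure estimate $\omega_k(\hat x_k) \leq c_0 L_k^{-q}$ with the sharp exponent $q$; while this is the nonlinear analogue of the classical two-dimensional harmonic measure bound $\omega(\hat x) \asymp (|\hat x|/L)^{\pi/\theta}$, in our setting it requires a careful adaptation of the $n$-modulus/capacity computation from \cite{GLM} to a bounded annular sector. The sharp exponent $\varphi/(1+\varphi)$ in $D$ then emerges automatically from the scaling choice $L_k = r_k^\varphi$, which is itself forced by the requirement that the boundary bound on $v_k$ exactly vanish on the truncated piece.
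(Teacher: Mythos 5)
Your overall architecture is the same as the paper's: truncate the sector at radius comparable to $|x_k|^{1+\varphi}$, subtract a constant so the boundary data on the lateral part becomes nonpositive, compare with a multiple of the $F$-harmonic measure of the spherical cap via the comparison principle (Theorem \ref{CompPrinc}), and use the sector estimate $\omega(x;r)\leq 4|x-x_0|^q r^{-q}$ with $q=d_n m(E)^{-1/(n-1)}(\alpha/\beta)^{1/n}$, which is exactly Lemma \ref{318} (a translated form of \cite[Lemma 3.18]{GLM}); the exponent $\varphi/(1+\varphi)$ then falls out of the scaling just as in the paper. So the ``main obstacle'' you anticipate, re-deriving the truncated-sector harmonic-measure bound, is not an obstacle at all: it is already available and the paper simply quotes it.

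The genuine gap is in your normalization and the fallback you propose for it. You subtract exactly $C(1+\varphi)\log r_k$, so the margin you retain at the test point is $v_k(\hat x_k)=u(x_k)-C(1+\varphi)\log|x_k|$, which the negation of the first alternative only guarantees to be positive, not bounded away from $0$; if it tends to $0$ your bound $M_S(R_k,u)\gtrsim v_k(\hat x_k)R_k^{D}$ degenerates and does not give $\limsup_{r\to\infty}M_S(r,u)r^{-D}>0$. Your proposed repair (``enlarge $\varphi$ infinitesimally'') does not prove the stated theorem: in that residual case you only obtain $u(x)\leq C(1+\varphi)\log|x|+O(1)$, i.e.\ the first alternative at every level $\varphi''>\varphi$, while the theorem asserts the dichotomy at the given $\varphi$ (and running your argument at a level $\varphi'<\varphi$ to make the margin blow up yields the second alternative only with the strictly smaller exponent $q\varphi'/(1+\varphi')$). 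The paper closes exactly this case by choosing the subtracted constant adaptively: with $C_m=u(x_m)/((1+\varphi)\log|x_m|)>C$ and $R_m=|x_m|^{1+\varphi}e^{-\delta/C_m}-|x_0|$, the comparison function is anchored to $u(x_m)$ itself, so $v_m(x_m)=\delta$ is a fixed positive constant independent of how small the excess is, while the lateral boundary inequality $C_m\log(|x_0|+R_m)\geq C\log^+|x|$ still holds because $C_m>C$. Incorporating that device (or an equivalent one) is necessary; without it your proof establishes only a weakened version of Theorem \ref{PLFF}, albeit one that would still suffice for the dynamical application in Section 4.
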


This theorem weakens the hypotheses of \cite[Theorem 3.16]{GLM} by Granlund, Lindqvist and Martio from $u(x) = O(1)$ to $u(x)=O(\log |x|)$ on $\partial S$, although their conclusions in the alternative have a $\liminf$ instead of a $\limsup$. It is certainly conceivable that Theorem \ref{PLFF} could be improved, but for our dynamical applications this suffices.
Using the fact that if $f$ is a quasiregular mapping then $\log|f|$ is sub-$F$-extremal, we obtain the following corollary.

\begin{corollary}
\label{cor:pl} Let $E\subsetneq S^{n-1}$ be a domain and $x_0\in \R^n$.
Suppose that $S = S_{x_0,E}$ is a sector in $\R^n$. Let $f\colon S\rightarrow \R^n$ be a $K$-quasiregular mapping in $S$ such that 
\[ \limsup\limits_{x\rightarrow y} \log |f(x)|\leq p\log^+|y| + C\] 
at each $y\in \partial S$ for some constants $C$ and $p>0$.  Then given $\varepsilon >0$, either for all sufficiently large $|x|$ in $S$ we have
\[ \log |f(x)| \leq p(1+\varepsilon) \log |x| +C \]
or 
\[
\limsup\limits_{r\rightarrow\infty}\log M_S(r,f)r^{-q'}>0
\]
where $q'={d_n m(E)^{-1/(n-1)}K^{-2/n}}\varepsilon / (1+\varepsilon)$ and $d_n>0$ depends only on $n$.
\end{corollary}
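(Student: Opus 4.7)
The plan is to derive this corollary by applying Theorem \ref{PLFF} to a suitable additive shift of $\log|f|$. First I would use the standard fact, already invoked in the transition from \cite[Theorem 3.16]{GLM} to Corollary \ref{cor:2}, that when $f \colon S \to \R^n$ is $K$-quasiregular the function $\log|f|$ is sub-$F$-extremal on $S$ for a variational kernel $F$ of type $n$ whose structural constants $\alpha, \beta$ satisfy $(\alpha/\beta)^{1/n} = K^{-2/n}$. This identification is visible by direct comparison of the exponents of $K$ in Corollary \ref{cor:2} with those of $\alpha/\beta$ in the GLM statement, and the same ingredient goes through unchanged in our weaker-boundary setting because it is a purely local/PDE-theoretic fact about $f$, independent of the dynamics.

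Since the defining PDE of a sub-$F$-extremal, together with the associated comparison principle, is insensitive to additive constants, the function $u(x) := \log|f(x)| - C$ is also sub-$F$-extremal for the same kernel with the same structural constants. The boundary hypothesis then reads $\limsup_{x\to y} u(x) \leq d\log^+|y|$ for every $y \in \partial S$. At this point I would apply Theorem \ref{PLFF} to $u$, taking the constant there (which I shall call $C'$ to distinguish it from the $C$ of the corollary) to be $C' = d$, and with the same $\varphi > 0$ as in the hypothesis of the corollary.

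Theorem \ref{PLFF} then delivers the dichotomy that either $u(x) \leq d(1+\varphi)\log|x|$ for all sufficiently large $|x|$ in $S$, or $\limsup_{r\to\infty} M_S(r,u)\, r^{-D} > 0$, with $D = d_n m(E)^{-1/(n-1)}(\alpha/\beta)^{1/n}\varphi/(1+\varphi)$. Translating back via $u = \log|f| - C$: the first alternative becomes $\log|f(x)| \leq d(1+\varphi)\log|x| + C$ for all sufficiently large $|x|$, which is the first conclusion of the corollary. For the second alternative, note $M_S(r,u) = \log M_S(r,f) - C$, and since $C r^{-D} \to 0$ as $r \to \infty$, the inequality $\limsup_{r\to\infty} \log M_S(r,f)\, r^{-D} > 0$ follows. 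Substituting $(\alpha/\beta)^{1/n} = K^{-2/n}$ yields the stated exponent.

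I do not expect any real obstacle, because the reduction is entirely formal once the two facts above are in hand: sub-$F$-extremality of $\log|f|$ with the correct structural constants, and invariance of sub-$F$-extremality under additive shifts. Both are standard consequences of the axioms of a variational kernel of type $n$ as recalled in the preceding section, so the work is genuinely concentrated in Theorem \ref{PLFF} itself rather than in this corollary.
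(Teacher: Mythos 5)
Your argument is correct and is essentially the paper's proof: you shift $\log|f|$ by the additive constant $-C$ (legitimate by Lemma \ref{propSubSuper}), apply Theorem \ref{PLFF} with boundary constant $d$, and translate the two alternatives back via $M_S(r,u)=\log M_S(r,f)-C$. The only small inaccuracy is the claimed equality $(\alpha/\beta)^{1/n}=K^{-2/n}$: for the kernel $f^{\sharp}F_I$ one has $\alpha=K_O(f)^{-1}$ and $\beta=K_I(f)$, so in general only $\alpha/\beta\geq K^{-2}$ holds; but since this makes the exponent delivered by Theorem \ref{PLFF} at least $D$, and a larger exponent only strengthens the second alternative, the stated conclusion still follows, exactly as the paper notes.
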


The remainder of this paper is organized as follows. In section 2, we recall background material on quasiregular dynamics and nonlinear potential theory. In section 3, we prove Theorem \ref{PLFF}. In section 4, we prove a topological result needed in the proof of Theorem \ref{Lf}. Finally, in section 5 we prove our main results, Theorem \ref{Lf}, Corollary \ref{cor:3}, and Theorem \ref{inverse}.

We thank the referee for carefully reading the paper and providing many useful comments.

\section{Preliminaries}

As notation, we let $B(x_0,r)$ denote the open ball centered at $x_0 \in \R^n$ of radius $r$ and let $S(x_0,r)$ be the boundary of $B(x_0,r)$. The unit sphere in $\R^n$ is denoted by $S^{n-1}$.

\subsection{Quasiregular Dynamics}

We first recall some definitions about quasiregular mappings and their dynamics. For more information on quasiregular mappings, we refer the reader to Rickman's monograph \cite{Rickman}.

Let $G$ be a domain in $\R^n$ and $X=\R^n$ or $X=\R$. Denote by $C(G,X)$ the set of continuous functions $G\to X$. Denote by $W^{1,n}_\text{loc}(G,X)$ the Sobolev space of $\R^n$-valued or real-valued functions on $G$ that are locally in $L^n(G)$ with weak first order partial derivatives that are also locally in $L^n(G)$.

\begin{definition} Let $f\in C(G,\R^n)\cap W^{1,n}_\text{loc}(G,\R^n)$ for $G\subset \R^n$. We say $f$ is a quasiregular mapping if there exists a constant $K\geq 1$ such that $\left|f'(x)\right|^n\leq K J_f(x)$ for almost every $x\in \R^n$ where $\left|f'(x)\right|=\max\limits_{{\left|h\right|=1}}\left|f'(x)(h)\right|$ is the operator norm and $J_f(x)$ is the Jacobian of $f$ at $x$. The smallest such $K$ is called the outer dilatation of $f$ and is denoted $K_O(f).$
\end{definition}

If $f$ is quasiregular, there also exists a constant $K' \geq 1$ such that $J_f(x)\leq K' \min\limits_{\left|h\right|=1}\left|f'(x)(h)\right|^n$ almost everywhere. The smallest constant $K'$ here is called the inner dilatation of $f$ and is denoted by $K_I(f)$. The maximal dilatation is $K(f)=\max\{K_O(f),K_I(f)\}$. A $K$-quasiregular mapping is one where $K\geq K(f)$, so for example if $f$ is $2$-quasiregular, then it is also $3$-quasiregular.

We say that $f$ is an entire quasiregular mapping if $f$ is defined on $\R^n$ and that a non-constant entire quasiregular mapping $f$ is of transcendental-type if $f$ has an essential singularity at $\infty$. Otherwise we say that $f$ is of polynomial-type. 

Given an unbounded sub-domain $U$ of $\R^n$ and a function $f:U \to \R^n$, we define
\[ M_U(r,f) = \sup_{|x| \leq r, x\in U} |f(x) |,\]
as long as $\overline{B(0,r)}$ meets $U$. In the case where $U=\R^n$ and $f$ is quasiregular, the fact that $f$ is an open mapping means that we may simplify this notation and just write
\[ M(r,f) = \sup _{|x| = r} |f(x)|.\]
Transcendental-type quasiregular mappings may be classified via their growth as follows.

\begin{lemma}(\cite{Ber}, Lemma 3.4)
\label{lem:berg}
Let $f: \R^n \to \R^n$ be a quasiregular mapping. Then $f$ is of transcendental-type if and only if
\[ \lim\limits_{r\to \infty} \frac{ \log M(r,f) }{\log r} = + \infty .\]
\end{lemma}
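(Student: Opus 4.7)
The plan is to prove both directions by contrapositive, using the quasiregular classification of isolated singularities at $\infty$ via Rickman's removable singularities theorem together with the local power-type estimate coming from the finite local index of a non-constant quasiregular map. The guiding dictionary I want to exploit is: $f$ is of polynomial-type if and only if $\infty$ is at worst a pole of $f$, if and only if $M(r,f)$ grows at most polynomially. Each of these equivalences is classical in the holomorphic setting; the quasiregular analogues require H\"older/modulus-of-continuity estimates rather than any algebraic manipulation like $f(z)/z^d$, since such manipulations are unavailable for quasiregular $f$.

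For the easy direction --- polynomial-type implies $\log M(r,f)/\log r$ is bounded --- I would invert both source and target via $x\mapsto x/|x|^2$ to convert the pole of $f$ at $\infty$ into a removable singularity at $0$ for an auxiliary quasimeromorphic map $g$. Rickman's removable singularity theorem extends $g$ quasiregularly across $0$, and the finite local index of this extension gives a bound of the form $|g(y)|\le C|y|^{\alpha}$ near $0$ for some $\alpha>0$. Undoing the inversion yields $M(r,f)\le Cr^{1/\alpha}$ for all large $r$, so $\log M(r,f)/\log r$ stays bounded and in particular does not tend to $+\infty$.

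For the converse I would argue contrapositively: assuming $\liminf_{r\to\infty} \log M(r,f)/\log r<\infty$, pick $d>0$ and a sequence $r_k\to\infty$ with $M(r_k,f)\le r_k^{d}$. The maximum principle for non-constant quasiregular mappings says that $|f|$ attains its maximum over $\overline{B(0,r)}$ on $S(0,r)$ and that $M(r,f)$ is non-decreasing in $r$; this upgrades the sequential bound to $M(r,f)\le r_k^{d}$ on $[0,r_k]$, and hence to a global polynomial bound up to multiplicative constants controlled by the ratios $r_{k+1}/r_k$. Inverting as in the first direction produces a quasiregular map $g$ on a punctured neighborhood of $0$ with $|g(y)|=O(|y|^{-d})$; post-composing with a target inversion makes $g$ bounded, so Rickman's removable singularity theorem extends it across $0$. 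This shows $\infty$ is at worst a pole for $f$, contradicting transcendental-type.

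The step I expect to be the main obstacle is the sequential-to-uniform upgrade together with the correct invocation of the singularity-classification theorem. The failure of $\lim \log M(r,f)/\log r=+\infty$ only provides polynomial control along a subsequence, and if the ratios $r_{k+1}/r_k$ blow up too quickly the monotonicity of $M(\cdot,f)$ loses too much slack to preserve a genuinely polynomial bound; one has to choose the subsequence (realizing the $\liminf$) carefully so that the resulting bound on the inverted map $g$ remains in the polynomial regime where Rickman's theorem applies. Everything else amounts to bookkeeping once one commits to the ``invert and apply the removable singularity theorem'' strategy, which is the natural quasiregular substitute for the missing $f(z)/z^{d}$ trick.
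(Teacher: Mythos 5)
There is no in-paper proof to compare against here: the paper simply quotes this statement from Bergweiler (\cite{Ber}, Lemma 3.4), so your proposal has to stand on its own, and as written it does not. The decisive gap is in your converse direction. The negation of $\lim_{r\to\infty}\log M(r,f)/\log r=+\infty$ only gives a sequence $r_k\to\infty$ with $M(r_k,f)\le r_k^{d}$, and your upgrade to a global polynomial bound fails: monotonicity of $M(\cdot,f)$ yields only $M(r,f)\le r_{k+1}^{d}$ for $r\in[r_k,r_{k+1}]$, which is worthless when $r_{k+1}/r_k\to\infty$, and the $\liminf$ hypothesis gives no control whatsoever on these ratios, so no ``careful choice of subsequence'' can create it. In the holomorphic case this is rescued by applying the maximum principle to $y^{m}g(y)$ on the annuli between the good spheres, and that is exactly the algebraic trick the paper stresses is unavailable for quasiregular maps. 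Thus the real content of the lemma --- that an essential singularity at infinity forces $M(r,f)>r^{d}$ for every $d$ and all sufficiently large $r$, not merely along some sequence --- is left unproved; it needs genuinely quasiregular input (Rickman's value-distribution/capacity machinery, or comparison with explicit $F$-extremals on annuli), none of which appears in your sketch. Relatedly, ``post-composing with a target inversion makes $g$ bounded'' is unjustified: a bound $|g(y)|\le C|y|^{-d}$ does not keep $g(y)$ away from $0$, so the composed map need not be bounded near the puncture; what you would need is a removability/classification statement for an isolated singularity under a polynomial bound, which is essentially the lemma itself.

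There is also a smaller, fixable error in your easy direction: for the doubly inverted map $g$ with $g(0)=0$, the H\"older-type upper estimate $|g(y)|\le C|y|^{\alpha}$ translates, after undoing the inversions, into a \emph{lower} bound on $|f(x)|$ (a minimum-modulus estimate), not into $M(r,f)\le Cr^{1/\alpha}$. The estimate you actually need is the companion lower bound $|g(y)|\ge c|y|^{\beta}$ near $0$, with $\beta$ determined by the local index of $g$ at $0$ and the dilatation; this does follow from Rickman's local distortion results, so that direction of your argument goes through once the inequality is turned around, but as written the implication is reversed.
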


A related notion to the expression above is the order of growth.

\begin{definition}
The order of an entire quasiregular mapping $f\colon \R^n\rightarrow\R^n$ is $$\mu_f=\limsup\limits_{r\rightarrow\infty}(n-1)\frac{\log\log M(r,f)}{\log r}.$$
\end{definition}

Next, consider the iteration of quasiregular mappings. If $f$ is an entire $K_1$-quasiregular mapping and $g$ is an entire $K_2$-quasiregular mapping, then the composition $f\circ g$ is also quasiregular, but the maximal dilatation typically goes up. In general, we have $K(f\circ g) \leq K_1K_2$. This means that the iterates $f^m$, for $m\in \N$, of a quasiregular mapping may be defined, but there is not necessarily a uniform bound on the maximal dilatation of the iterates. This is an obstacle to developing the theory of quasiregular dynamics in exact analogy as that of complex dynamics, as then the normal family machinery which is key in complex dynamics may not be available.

Nevertheless, the Julia set of a transcendental-type quasiregular mapping was defined by Bergweiler and Nicks \cite{BN} via a blowing-up property and has many of the properties expected of the Julia set.
As notation, let $\text{cap}(A,C)$ be the conformal capacity of a condenser as defined in \cite[p. 53]{Rickman} where $A$ is an open set in $\R^n$ and $C$ is a non-empty compact subset of $A$. It is known that if $\text{cap}(A,C)=0$ for some bounded open set $A$ containing $C$, then $\text{cap}(A',C)=0$ for any bounded open set $A'$ containing $C$. In this case, we use the notation $\text{cap}(C)=0$.

\begin{definition} 
Let $f\colon \R^n\rightarrow\R^n$ be a transcendental-type quasiregular mapping. The Julia set of $f$ is defined to be $$J(f)=\{x\in \R^n: \text{cap}\left(\R^n\backslash \cup_{k=1}^\infty f^k(U)\right)=0 \text{ for every neighborhood } U \text{ of } x\}.$$ The quasi-Fatou set is $QF(f)=\R^n\backslash J(f)$.
\end{definition}

We note that there is a version of this definition for polynomial-type mappings (see for example \cite{Ber2}), but there are extra complications that are not relevant to the current paper, so we say no more about this here.

For transcendental-type quasiregular mappings, $J(f)$ is unbounded (see for example \cite[Proposition 2.1]{Fletcher}), so we can study the radial distribution of $J(f)$, denoted by $L(f)$.
Again from \cite[Proposition 2.1]{Fletcher}, it follows that $L(f)$ is a closed, nonempty subset of $S^{n-1}$.

\subsection{Non-linear Potential Theory}

Next, we recall some definitions and properties from non-linear potential theory. For more details on this theory, we refer to Rickman's monograph \cite{Rickman} and Heinonen, Kilpel\"{a}inen, and Martio's book \cite{HKM}. As usual, suppose $G\subset \R^n$ is a domain. 

\begin{definition}
A function $F\colon G\times \R^n\rightarrow\R$ is called a variational kernel of type $n$ in $G$ if $F$ satisfies the following conditions.
\begin{itemize}
\item For each open $U\subset\subset G$ and every $\epsilon>0$ there exists a compact set $V\subset U$ with $m(U\backslash V)<\epsilon$ and $F\vert_{V\times \R^n}$ is continuous. 
\item For almost every $x\in G$ the function $h\mapsto F(x,h)$ is strictly convex and continuously differentiable.
\item There exist positive constants $\alpha$ and $\beta$ such that for almost every $x\in G$, 
\begin{equation}
\label{eq:2}
\alpha|h|^n\leq F(x,h)\leq \beta|h|^n
\end{equation}
for all $h\in \R^n$.
\item For almost every $x\in G$, we have $$F(x,\lambda h)=|\lambda|^n F(x,h),$$ $\lambda\in \R, h\in \R^n$.
\end{itemize}
\end{definition}

Denote by $\alpha(F)$ the largest such constant $\alpha$ and by $\beta(F)$ the smallest such constant $\beta$ such that \eqref{eq:2} holds. These are called the structural constants for $F$. We note that variational kernels of type $p\neq n$ have been studied, but we will focus only on the $p=n$ case. Henceforth, all variational kernels will be of type $n$.

Next, we recall the definitions of $F$-extremals, sub-$F$-extremals, and super-$F$-extremals in $G$ and list some properties concerning them. These are the generalizations of harmonic, subharmonic, and superharmonic functions to this setting. 

\begin{definition}
A real-valued function $u\in W_{n,\text{loc}}^1(G,\R)$ is called an $F$-extremal if $u$ is a solution of the Euler equation $\nabla\cdot \nabla_h F(x,\nabla u)=0$ in the weak sense, i.e. $$\int_G \nabla_h F(\cdot,\nabla u)\cdot \nabla \phi \: dm=0$$ for all real-valued functions $\phi\in C_0^\infty(G,\R)$.
\end{definition}

\begin{definition} 
An upper semi-continuous function $u\colon G\rightarrow\R\cup\{-\infty\}$ is called a sub-$F$-extremal if $u$ satisfies the following condition: if a domain $U$ is relatively compact in $G$ and $h\in C(\overline{U},\R)$ is an $F$-extremal in $U$ with $h\geq u$ in $\partial U$ then $h\geq u$ in $U$. A function $v\colon G\rightarrow\R\cup\{\infty\}$ is called a super-$F$-extremal if $-v$ is a sub-$F$-extremal. 
\end{definition}

Given an unbounded sub-domain $U$ of $\R^n$ such that $\overline{B(0,r)}\cap U\neq \emptyset$ and a sub-$F$-extremal $u\colon U\rightarrow\R$, we define $$ M_U(r,u)=\sup_{|x|\leq r,x\in U}u(x).
$$

\begin{lemma}\label{propSubSuper} Let $h$ be an $F$-extremal, $u$ be a sub-$F$-extremal, $v$ be super-$F$-extremal, and $\lambda\in \R$. Then, we have the following properties:

\begin{itemize}
\item $h$ is a sub-$F$-extremal and a super-$F$-extremal,

\item $\lambda h$ and $h+\lambda$ are $F$-extremals,

\item $|\lambda| u$ and $u+\lambda$ are sub-$F$-extremals, and

\item $|\lambda| v$ and $v+\lambda$ are super-$F$-extremals.
\end{itemize}
\end{lemma}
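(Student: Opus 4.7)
The plan is to verify each of the four bullets by combining the weak Euler equation characterisation of $F$-extremals with the comparison principle built into the definition of sub-$F$-extremals, using the $n$-homogeneity $F(x,\lambda h)=|\lambda|^n F(x,h)$ of the variational kernel in the $h$-slot.

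\textbf{First bullet.} If $h$ is an $F$-extremal and $U\subset\subset G$ with $g\in C(\overline{U})$ an $F$-extremal in $U$ satisfying $g\geq h$ on $\partial U$, then I would appeal to the comparison principle for $F$-extremals (which is the standard consequence of strict convexity of $F(x,\cdot)$; see the references in the paper) to deduce $g\geq h$ in $U$, giving that $h$ is sub-$F$-extremal. The super-$F$-extremal statement follows by reversing inequalities (equivalently, $-h$ is an $F$-extremal by the next bullet, hence sub-$F$-extremal).

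\textbf{Second bullet.} For $h+\lambda$ the claim is immediate from $\nabla(h+\lambda)=\nabla h$, so the weak Euler equation is unchanged. For $\lambda h$ I would differentiate the homogeneity relation $F(x,\lambda\xi)=|\lambda|^n F(x,\xi)$ in $\xi$ to get
\[
\nabla_h F(x,\lambda\xi)\,\lambda = |\lambda|^n\,\nabla_h F(x,\xi),
\]
so $\nabla_h F(x,\lambda\nabla h)=\lambda|\lambda|^{n-2}\nabla_h F(x,\nabla h)$ for $\lambda\neq 0$. Taking divergence and pairing with a test function $\phi\in C_0^\infty(G)$ gives
\[
\int_G \nabla_h F(x,\nabla(\lambda h))\cdot\nabla\phi\,dm
=\lambda|\lambda|^{n-2}\int_G \nabla_h F(x,\nabla h)\cdot\nabla\phi\,dm=0,
\]
so $\lambda h$ satisfies the Euler equation weakly. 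The case $\lambda=0$ is trivial.

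\textbf{Third bullet.} These follow from bullets one and two together with the comparison property in the definition of sub-$F$-extremal. Fix $U\subset\subset G$ and an $F$-extremal $g\in C(\overline U)$. If $g\geq u+\lambda$ on $\partial U$, then by bullet two $g-\lambda$ is an $F$-extremal with $g-\lambda\geq u$ on $\partial U$; hence $g-\lambda\geq u$ in $U$, i.e.\ $g\geq u+\lambda$ in $U$, so $u+\lambda$ is sub-$F$-extremal. For $|\lambda|u$ with $\lambda\neq 0$, if $g\geq |\lambda|u$ on $\partial U$ then by bullet two $g/|\lambda|$ is an $F$-extremal bounding $u$ from above on $\partial U$, hence in $U$, and multiplying by $|\lambda|$ gives $g\geq|\lambda|u$ in $U$. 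Upper semicontinuity of $|\lambda|u$ and $u+\lambda$ is inherited from that of $u$.

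\textbf{Fourth bullet.} This is immediate from the third bullet by replacing $v$ with $-v$: $v$ is super-$F$-extremal iff $-v$ is sub-$F$-extremal, so the third bullet applied to $-v$ shows $|\lambda|(-v)=-|\lambda|v$ and $-v+\lambda=-(v-\lambda)$ are sub-$F$-extremal, i.e.\ $|\lambda|v$ and $v-\lambda$ (and hence $v+\lambda$, replacing $\lambda$ by $-\lambda$) are super-$F$-extremal. The only real obstacle here is bookkeeping the interaction of $|\lambda|^n$-homogeneity with sign changes of $\lambda$ when verifying bullet two; once that computation is in place, the remaining bullets are essentially formal consequences of the comparison definition.
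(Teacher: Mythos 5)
Your proof is correct, and since the paper states Lemma \ref{propSubSuper} without proof (it is quoted as standard background from the nonlinear potential theory literature, e.g.\ Granlund--Lindqvist--Martio and Heinonen--Kilpel\"ainen--Martio), your argument is exactly the standard one: the homogeneity computation $\nabla_h F(x,\lambda\xi)=\lambda|\lambda|^{n-2}\nabla_h F(x,\xi)$ handles the second bullet, and the remaining bullets reduce formally to the comparison definition. The only external ingredient you invoke is the comparison principle for two $F$-extremals in the first bullet, which cannot be replaced by the paper's Theorem \ref{CompPrinc} (that would be circular) but is indeed the standard consequence of the strict monotonicity of $\xi\mapsto\nabla_h F(x,\xi)$, so citing it is appropriate.
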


The first property listed in Lemma \ref{propSubSuper} is a consequence of \cite[Theorem 4.18]{GLM2}. 
The following $F$-Comparison Principle is key to proving Phragm\'en-Lindel\"of-type results.

\begin{theorem}(\cite{GLM3}, Lemma 2.3)
\label{CompPrinc} 
Let $G$ be a bounded domain, $u$ be a sub-$F$-extremal in $G$, and $v$ be a super-$F$-extremal in $G$. If $$\limsup\limits_{x\rightarrow y}u(x)\leq \liminf\limits_{x\rightarrow y}v(x)$$ for all $y\in \partial G$ and if the left and right hand sides of the inequality are neither $\infty$ nor $-\infty$ at the same time then $u\leq v$ in $G$.
\end{theorem}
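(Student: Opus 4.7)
The plan is to argue by contradiction, using the variational structure of sub- and super-$F$-extremals together with strict convexity of $F$ in the gradient variable. Suppose $u(x_0) > v(x_0)$ at some $x_0 \in G$, and set $\epsilon = (u(x_0)-v(x_0))/2 > 0$. Consider the superlevel set
$$D = \{x \in G : u(x) > v(x)+\epsilon\}.$$
Since $u$ is upper semi-continuous and $v$ is lower semi-continuous, $D$ is open and contains $x_0$, so it is nonempty. The boundary hypothesis, together with the extra assumption ruling out simultaneous infinities, guarantees that at each $y \in \partial G$ we have $\limsup_{x\to y}(u(x)-v(x)) \leq 0 < \epsilon$, so $D$ stays a positive distance from $\partial G$, i.e., $D\subset\subset G$.

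Next I would fix a smooth relatively compact subdomain $D'$ with $D\subset\subset D'\subset\subset G$ and use $\phi = (u-v-\epsilon)^+$ as a test function. This $\phi$ has compact support in $D'$, vanishes on $\partial D'$, and $\nabla\phi = \nabla(u-v)$ almost everywhere on $D$ and is zero elsewhere. Testing the weak Euler inequality $\int \nabla_h F(x,\nabla u)\cdot \nabla\phi\,dm \leq 0$ for the sub-$F$-extremal $u$, subtracting the corresponding inequality with opposite sign for the super-$F$-extremal $v$, and restricting to $D$, yields
$$\int_D \bigl(\nabla_h F(x,\nabla u) - \nabla_h F(x,\nabla v)\bigr)\cdot \nabla(u-v)\,dm \leq 0.$$
Strict convexity of $F(x,\cdot)$ makes the integrand pointwise non-negative, vanishing only where $\nabla u = \nabla v$ a.e. Combined with $u-v = \epsilon$ on $\partial D$ by semi-continuity, this forces $u-v$ to be locally constant (hence identically $\epsilon$) on each component of $D$, contradicting the definition of $D$ as the strict superlevel set.

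The principal technical obstacle is justifying this integration-by-parts argument when $u$ and $v$ are only semi-continuous and not a priori in $W^{1,n}_{\text{loc}}$. The standard remedy is regularization: approximate $u$ from above by a decreasing sequence of continuous (Lipschitz) sub-$F$-extremals obtained via mollification and the $F$-superminimum construction, and likewise approximate $v$ from below by super-$F$-extremals, then pass to the limit using lower semi-continuity of the $F$-energy associated to $F$. A cleaner alternative, closer to the definitions used in Section 2, is to solve the Dirichlet problem on a smooth $D'\supset D$ for an $F$-extremal $h$ whose continuous boundary data approximates $v+\epsilon$ on $\partial D'$, and then apply the definitions of sub- and super-$F$-extremal directly to sandwich $u \leq h \leq v+\epsilon$ inside $D'$, producing the same contradiction without explicit recourse to the weak PDE. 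In either approach the essential analytic input is strict convexity of $F$ in the gradient variable.
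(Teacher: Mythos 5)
The paper itself does not prove Theorem \ref{CompPrinc}: it is quoted in the preliminaries as standard nonlinear potential theory (Granlund--Lindqvist--Martio, Heinonen--Kilpel\"ainen--Martio, Rickman), so there is no in-paper argument to measure you against; I can only assess your proposal on its own terms.

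Of your two routes, the second (``sandwich'') one is the right proof for the definitions actually used here, and it is essentially the classical argument. The boundary hypothesis together with the no-simultaneous-infinities clause gives $\limsup_{x\to y}(u(x)-v(x))\le 0$ at every $y\in\partial G$, so by compactness of $\partial G$ the open set $D=\{u>v+\epsilon\}$ has compact closure in $G$. Enclose $\overline{D}$ in a smoothly bounded $D'\subset\subset G$ containing $x_0$; on $\partial D'$ you have $u\le v+\epsilon$ with $u$ usc and $v+\epsilon$ lsc, so (after truncating, both are bounded on this compact set) you can insert a continuous $\psi$ with $u\le\psi\le v+\epsilon$. Solve the $F$-Dirichlet problem in $D'$ with data $\psi$ to get $h\in C(\overline{D'})$, and apply the comparison definition of sub-$F$-extremality ($h\ge u$ on $\partial D'$ gives $h\ge u$ in $D'$) and, for $v$, the definition applied to $-v$ with the extremal $-h+\epsilon$ (an $F$-extremal by Lemma \ref{propSubSuper}, using $n$-homogeneity in $h$), which gives $h\le v+\epsilon$ in $D'$. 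Then $u(x_0)\le v(x_0)+\epsilon$, contradicting the choice of $\epsilon$. To make this complete you must cite two nontrivial inputs you currently gloss over: solvability of the Dirichlet problem for $F$-extremals with continuous boundary data on a regular subdomain, with continuity up to the boundary, and the existence of the continuous insertion $\psi$; strict convexity is not really the engine of this route, existence and boundary regularity are.

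Your first, variational route has genuine gaps as written. In this paper sub- and super-$F$-extremals are defined purely by comparison with $F$-extremals; they are only semicontinuous, need not belong to $W^{1,n}_{\mathrm{loc}}$, and do not a priori satisfy any weak Euler inequality, so $\nabla u$, $\nabla v$ and the test-function computation are not available. Mollification does not fix this: convolution does not preserve sub-$F$-extremality for a nonlinear kernel (unlike the Laplacian case), and approximating a general sub-$F$-extremal from above by continuous Sobolev subsolutions requires the obstacle-problem/Poisson-modification machinery, which is of comparable depth to the theorem you are proving. Also, semicontinuity only yields $u-v\le\epsilon$ on $\partial D$, not $u-v=\epsilon$; the correct way to close that argument is to note that $(u-v-\epsilon)^+$ is a compactly supported Sobolev function whose gradient vanishes a.e.\ (by the strict monotonicity of $\nabla_hF(x,\cdot)$), hence is identically zero.
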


The connection between variational kernels and quasiregular mappings is revealed by the following definition.

\begin{definition} 
\label{def:sharp}
Let $f\colon G\rightarrow\R^n$ be a quasiregular mapping. Suppose $G'\subset \R^n$ is a domain such that $f(G)\subset G'$. Let $F\colon G'\times \R^n\rightarrow \R$ be a variational kernel in $G'$. Define $f^\sharp F\colon G\times \R^n\rightarrow \R$ as 
$$f^\sharp F(x,h)=
\begin{cases}
        F(f(x),J_f(x)^{1/n}f'(x)^{-1*}h), &  J_f(x)\neq 0\\
        |h|^n, & J_f(x)=0 \text{ or } J_f(x) \text{ does not exist.}
        \end{cases}$$ where $A^*$ is the adjoint of a linear map $A\colon\R^n\rightarrow \R^n.$
\end{definition}

By \cite[Proposition VI.2.6]{Rickman}, we know that $f^\sharp F$ is a variational kernel in $G$ with possibly different constants $\alpha$ and $\beta$. In fact, we can take $\alpha=\alpha(F)K_O(f)^{-1}$ and $\beta=\beta(F)K_I(f)$. However, we may have $\alpha(f^\sharp F)\geq \alpha(F)K_O(f)^{-1}$ and $\beta(f^\sharp F)\leq \beta(F)K_I(f)$. 
In the special case when $F_I(x,h):=|h|^n$, $\alpha(f^\sharp F_I)=K_O(f)^{-1}$ and $\beta(f^\sharp F_I)=K_I(f)$. In particular, if $f:G\to \R^n$ is quasiregular then \cite[Corollary VI.2.8]{Rickman} states that $\log |f|$ is $f^{\sharp}F_I$ extremal in $G \setminus f^{-1}(0)$ which implies that $f$ is sub-$f^\sharp F_I$-extremal in $G$.

Next, we recall the definition of $F$-harmonic measure $\omega(C,G;F)$ of a set $C\subset \partial G$.

\begin{definition} Let $G\subset \R^n$ be a bounded domain, $F$ a variational kernel, and $f\colon \partial G\rightarrow \R\cup \{\infty,-\infty\}$ be any function. The upper Perron class associated to $f$ is defined as $\mathscr{U}_f=\{u\colon G\rightarrow\R\cup \{\infty\}:\liminf\limits_{x\rightarrow y}u(x)\geq f(y), y\in \partial G, u \text{ is a super-$F$-extremal and bounded from below in } G\}.$ Define the function $\overline{H_f}$ by $\overline{H_f}=\inf\{u:u\in \mathscr{U}_f\}$. 
\end{definition}

Note that if $f$ is bounded, then $\overline{H_f}$ is an $F$-extremal in $G$, see \cite[p.106]{GLM}.

\begin{definition}
Let $G\subset \R^n$ be a bounded domain and $C\subset \partial G$. Let $f\colon \partial G\rightarrow \{0,1\}$ be the characteristic function of $C$ in $\partial G$. The $F$-extremal $\overline{H_f}$ in $G$ is called the $F$-harmonic measure of $C$ with respect to $G$ and denoted $\omega(C,G;F)$.
\end{definition}

From the definition, we see that $0\leq \omega(C,G;F)\leq 1$.

\subsection{Sectors}

For our applications, we will need estimates on $\omega(C,G ; F)$ when $G$ is a subset of a sector in $\R^n$ and on the growth of quasiregular mappings in sectors in the quasi-Fatou set. Recall the definition of $S_{x_0,E}$ from Definition \ref{def:sector}.
It will also be useful to project points from sectors to $S^{n-1}$.

\begin{definition}
Given $x_0\in \R^n$, define the projection $\nu_{x_0} : \R^n \setminus \{x_0 \} \to S^{n-1}$ via
\[ \nu_{x_0}(x) = \frac{ x-x_0}{|x-x_0|} .\]
\end{definition}

If the context is clear, we typically suppress the subscripts and write $S = S_{x_0,E}$ and $\nu = \nu_{x_0}$.

The important feature of sectors is that they have constant angle measure. More precisely, if $S_{x_0,E}$ is a sector with vertex $x_0$ then the $(n-1)$-dimensional measure of $\nu (S_{x_0,E}\cap S(x_0,r) ) = m(E)$ is constant for all $r>0$. 

First, we have the following estimate for $F$-harmonic measure in a sector. To start, we fix a sector $S=S_{x_0,E}$ with vertex $x_0$. For $r>0$, let $S_r = S \cap B(x_0,r)$.
Then denote the value of the $F$-harmonic measure $\omega(S(x_0,r)\cap \partial S_r, S_r;F)$ at $x\in S_r$ by $\omega(x;r)$.
Using the above notation, we have the following estimate on $\omega(x;r)$.

\begin{lemma}\label{318} 
Let $S_{x_0,E}\subset \R^n$ be a sector and $F$ be a variational kernel in $S_{x_0,E}$ with structural constants $\alpha$ and $\beta$. Then, 
\[ \omega(x;r)\leq \frac{4|x-x_0|^{q}}{r^{q}}  \] 
where $q=d_n m(E)^{-1/(n-1)}(\alpha/\beta)^{1/n}$ and $d_n>0$ is a constant depending only on $n$. 
\end{lemma}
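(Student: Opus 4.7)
The plan is to combine an iterated application of the $F$-comparison principle with a scale-invariant one-step estimate for $\omega$. I would first translate coordinates so that $x_0 = 0$; the translated variational kernel still has structural constants $\alpha,\beta$. Let $\omega_r := \omega(\cdot;r)$ denote the $F$-harmonic measure of $S(0,r)\cap\overline S$ in $S_r$. Two elementary observations are used throughout: the dilation $x \mapsto \lambda x$ preserves the sector and sends $F$ to a kernel with the same structural constants (the $n$-homogeneity of $F$ absorbs the Jacobian factor), and $\omega_r(x)$ is monotone decreasing in $r$ for fixed $x$, which follows from comparing boundary data in the smaller sector via Theorem \ref{CompPrinc}.

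The key step is the single-scale estimate
$$
M \;:=\; \sup_{|y|=r,\; y\in\overline{S}} \omega_{2r}(y) \;\leq\; 2^{-q},
$$
with $q = d_n\, m(E)^{-1/(n-1)}(\alpha/\beta)^{1/n}$. By the dilation invariance above, $M$ is independent of $r$. I expect this to be the main obstacle. Following the approach of Granlund--Lindqvist--Martio, I would first pass from $F$ to the model kernel $F_I(x,h)=|h|^n$ via the pointwise inequality $\alpha|h|^n\le F\le\beta|h|^n$, which transfers super-$F$-extremal barriers to $n$-superharmonic ones at the cost of a factor $(\alpha/\beta)^{1/n}$ in the exponent of any power-type barrier. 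For $F_I$ one constructs an explicit separable $n$-superharmonic barrier of the form $\rho^{q'}w(\zeta)$ in polar coordinates $x=\rho\zeta$, where $w>0$ solves a nonlinear eigenvalue problem on $E$, and the exponent $q'$ is bounded below by $d_n\, m(E)^{-1/(n-1)}$ via an isoperimetric-type lower bound on the conformal capacity of $S^{n-1}\setminus E$. Combining these two reductions yields the stated bound on $M$.

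Given the single-scale estimate, the iteration is routine. On $S_r$ the function $\omega_{2r}$ is $F$-extremal with boundary values at most $M$ on $S(0,r)\cap\overline{S}$ and equal to $0$ on the lateral part of $\partial S_r$, while $M\omega_r$ is super-$F$-extremal by Lemma \ref{propSubSuper} with boundary values at least as large. Theorem \ref{CompPrinc} yields $\omega_{2r}(x)\leq M\omega_r(x)$ throughout $S_r$. Iterating from the trivial bound $\omega_{r_0}(x)\leq 1$ with $r_0 = 2|x|$ gives $\omega_{2^k r_0}(x)\leq M^k$; choosing the largest $k$ with $2^k r_0 \leq r$, so that $2^{-k}\leq 4|x|/r$, monotonicity in $r$ yields
$$
\omega(x;r)\;\leq\; M^k\;\leq\; 2^{-qk}\;\leq\;\left(\frac{4|x|}{r}\right)^{q}.
$$
The stated constant $4$ then follows after handling the complementary range $|x|\geq r/4$ via the trivial bound $\omega(x;r)\leq 1$ and by absorbing the factor $4^q$ into an adjusted value of $d_n$ inside $q$. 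Thus the only substantive input is the capacity-theoretic single-scale estimate; every other step is a formal consequence of the $F$-comparison principle and scale invariance.
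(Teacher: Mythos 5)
Your reduction to the vertex-at-origin case by translation (noting that the pulled-back kernel has the same structural constants) and your iteration scheme are fine; in fact the translation step is essentially all the paper does, since it then quotes \cite[Lemma 3.18]{GLM} for the case $x_0=0$. The genuine gap is in what you yourself flag as the main obstacle: the single-scale estimate $M\leq 2^{-q}$ with the specific exponent $q=d_n m(E)^{-1/(n-1)}(\alpha/\beta)^{1/n}$. Your proposed mechanism for it does not work as stated. The pointwise structural inequality $\alpha|h|^n\leq F(x,h)\leq\beta|h|^n$ does \emph{not} allow you to ``transfer super-$F$-extremal barriers to $n$-superharmonic ones'': sub- and super-extremality are defined through the Euler equation for $\nabla_h F$ and the comparison principle for that \emph{same} kernel, and an $n$-superharmonic function (i.e.\ a super-$F_I$-extremal) is in general neither super- nor sub-$F$-extremal. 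Theorem \ref{CompPrinc} only compares a sub-$F$-extremal with a super-$F$-extremal for one fixed $F$, so your separable barrier $\rho^{q'}w(\zeta)$ built for $F_I$ cannot be used directly against $\omega(\cdot;2r)$, and the factor $(\alpha/\beta)^{1/n}$ cannot be obtained by ``adjusting the exponent of a power-type barrier.'' Obtaining the comparison between $F$-harmonic measure and the model case, with precisely this exponent loss, is the substantive content of \cite[Lemma 3.18]{GLM} (it rests on variational/capacity estimates, not on barrier transfer), and your sketch replaces that cited result with an argument whose key step fails.

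Two smaller points. First, your claim that $M$ is ``independent of $r$'' by dilation invariance is imprecise: a dilation sends $F$ to a different kernel (with the same structural constants), so $M$ may depend on $r$; what you actually need, and what a correct single-scale estimate would give, is a bound $M(r)\leq 2^{-q}$ that is uniform over all kernels with structural constants $\alpha,\beta$, hence uniform in $r$. Second, the iteration and the monotonicity $\omega(\cdot;R)\leq\omega(\cdot;r)$ for $R>r$ are formally fine via Theorem \ref{CompPrinc} and Lemma \ref{propSubSuper}, modulo the usual care with boundary values at the corner set $\partial S\cap S(x_0,r)$, where the Perron-type definition of $\omega$ has to be invoked rather than naive boundary limits. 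Given that the paper's own proof consists of the translation reduction plus the citation, the economical fix for you is the same: prove only the translation step carefully (as you did) and invoke \cite[Lemma 3.18]{GLM} for the origin-vertex case, or else supply a genuine proof of the single-scale estimate by the capacity-theoretic route rather than by kernel-to-kernel barrier transfer.
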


\begin{proof}
In the case where the vertex $x_0=0$, the result is just a special case of \cite[Lemma 3.18]{GLM}. Let $f$ be the translation $f(x) = x+x_0$ and consider the variational kernel $f^{\sharp}F$. By Definition \ref{def:sharp}, it is clear that 
\[ f^{\sharp}F(x,h) = F(f(x),h)\]
and that $\alpha(f^{\sharp}F) = \alpha (F), \beta(f^{\sharp}F) = \beta(F)$. It follows that
\[ \omega(S(x_0,r)\cap \partial S_r, S_r;F)  = \omega ( f^{-1}(S) \cap S(0,r) , f^{-1}(S) \cap B(0,r) ; f^{\sharp}F) \]
from which we obtain the lemma, again by \cite[Lemma 3.18]{GLM}.
\end{proof}

Next, we will need the following growth condition on $f$ if a sector is contained in the quasi-Fatou set. To state it, it will help to have the following notation.
Denote by $d$ the spherical distance on $S^{n-1}$.
For $x_0 \in \R^n$, $\zeta \in S^{n-1}$ and $0<\eta \leq \pi $, let $\Omega ( x_0 , \zeta, \eta )$ be the sector $S_{x_0,E}$ where $E = \{ y \in S^{n-1} : d(y,\zeta) < \eta \}$.

\begin{theorem}(\cite{Fletcher}, Theorem 4.2) \label{sectorLf}
Let $f\colon \R^n\rightarrow \R^n$ be a transcendental-type $K$-quasiregular mapping. Suppose that for $x_0\in \R^n$, $\theta\in S^{n-1}$, and $\eta>0$ the sector $\Omega(x_0,\theta,\eta)$ is contained in a component $U$ of the quasi-Fatou set of $f$ for which the topological hull of $U$ is a proper subset of $\R^n$. Then, if $0<\eta'<\eta$, there exists a constant $p$ depending on $n, \eta-\eta'$ and $K$ such that $\vert f(x)\vert=O(|x|^p)$ for $x\in \Omega(x_0,\theta,\eta')$.
\end{theorem}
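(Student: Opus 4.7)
My plan is to combine the blow-up characterization of $J(f)$ with a quasiregular Schottky-type theorem, iterated along a Harnack chain. Writing $\Omega := \Omega(x_0,\theta,\eta)$, the hypothesis that $\Omega \subset U$ with $U$ a component of $QF(f)$ gives, directly from the Bergweiler--Nicks definition of $J(f)$, that the exceptional set $A := \R^n \setminus \bigcup_{k \geq 1} f^k(\Omega)$ has positive conformal capacity. In particular, $f$ maps $\Omega$ into $\R^n \setminus A$. The topological hull hypothesis plays a role here to ensure that $A$ retains a large-scale presence in $\R^n$: the unbounded complementary component of $U$ contributes to the complement of the forward orbit of $\Omega$ at all scales, so $A$ contains a configuration of widely separated points with separation quantifiable in terms of $n$ and $K$.

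With this missed set in hand, I would apply a quasiregular Schottky-type theorem (in the spirit of Miniowitz, see \cite[Chapter VI]{Rickman}) to $f$ restricted to balls $B(y, r) \subset \Omega$. The key geometric observation is that for any $y \in \Omega(x_0, \theta, \eta')$, the ball $B(y, \sin(\tfrac{\eta-\eta'}{2})|y-x_0|)$ is contained in $\Omega$, so we can work at scale $r$ proportional to $|y-x_0|$. The output I would need is an additive local bound of the form $\log|f(z)| \leq \log|f(y)| + D$ valid for $z \in B(y, r/2)$, where $D = D(n, K, \eta - \eta')$. This additive form is the sharp ingredient, and it follows from a Harnack-type estimate for the sub-$F$-extremal $\log|f|$ (via Lemma \ref{propSubSuper} applied to a shifted version), coupled with the capacity lower bound on $A$ guaranteed by Step 1.

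Finally, for $x \in \Omega(x_0, \theta, \eta')$ of large norm, I would iterate this local bound along a Harnack chain of length $N = O(\log|x|)$, with each $y_{k+1}$ chosen in $B(y_k, r_k/2)$ and $r_k$ proportional to $|y_k - x_0|$. Telescoping the additive bound across this chain produces $\log|f(x)| \leq \log|f(y_0)| + D N = O(\log|x|)$, which is precisely the claimed polynomial estimate $|f(x)| = O(|x|^d)$ with $d$ depending only on $n$, $K$, and $\eta - \eta'$. The hardest step is establishing the \emph{additive} (rather than multiplicative) form of the Schottky bound: the multiplicative version is standard, but preventing the bound from accumulating super-polynomially across the chain is where the full strength of the topological hull hypothesis enters, via its role in pinning down a scale-invariant lower bound on the geometry of $A$.
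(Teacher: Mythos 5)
A preliminary remark: this statement is not proved in the present paper at all --- it is quoted from \cite{Fletcher} (Theorem 4.2) and used as a black box --- so there is no internal proof to compare against; what follows assesses your sketch on its own merits. The first genuine gap is your opening claim that, \emph{directly from the Bergweiler--Nicks definition}, the set $A=\R^n\setminus\bigcup_{k\ge1}f^k(\Omega)$ has positive conformal capacity. The definition is pointwise and goes the wrong way for you: $x\in QF(f)$ means there exists \emph{some} (possibly tiny) neighbourhood $V_x$ with $\operatorname{cap}\bigl(\R^n\setminus\bigcup_k f^k(V_x)\bigr)>0$, and enlarging the neighbourhood enlarges the orbit and shrinks the omitted set. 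The set you need is the intersection of these omitted sets over a cover of the unbounded domain $\Omega$, and positive capacity is not preserved under such intersections, so nothing follows ``directly.'' One could try to repair this via invariance of $J(f)$ (so that the forward orbit of $\Omega$ stays in $QF(f)$ and $A\supset J(f)$), but then you still need positive capacity of $J(f)$ plus quantitative information about where it sits, none of which appears in the sketch; note also that for bounding $f$ itself only $\R^n\setminus f(\Omega)$ is relevant, so the higher iterates buy you nothing.

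The second, more serious gap is the uniform \emph{additive} bound $\log|f(z)|\le\log|f(y)|+D$ on $B(y,r/2)$ with $D=D(n,K,\eta-\eta')$ independent of $y$. Schottky/Montel-type theorems for $K$-quasiregular maps omitting a set of positive capacity give control in the chordal metric (normality); to convert this into a scale-uniform additive estimate for $\log|f|$ you need the omitted \emph{values} to be quantitatively well placed relative to $|f(y)|$ at every scale (for instance an omitted value of modulus comparable to $|f(y)|$, or omission of $0$ so that Harnack's inequality applies to the $F$-extremal $\log|f|$, together with a positivity normalization). The topological hull hypothesis lives on the \emph{domain} side: it says exactly that $\R^n\setminus U$ has an unbounded component, hence there is a closed connected set disjoint from $\Omega$ meeting every sphere $S(0,r)$ for all large $r$. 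Your sketch never converts this into the needed scale-invariant statement about the values omitted by $f$ on $\Omega$; the sentence asserting that the hull hypothesis ``pins down a scale-invariant lower bound on the geometry of $A$'' is precisely the statement that would have to be proved, and it is not clear it is even true in the form you use it. The final Harnack-chain telescoping step is standard and would indeed yield $|f(x)|=O(|x|^d)$ once a uniform additive estimate per step is available, but as written both key inputs --- the quantitative omitted set at every scale, and the additive Schottky/Harnack bound derived from it --- are asserted rather than established.
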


\section{Phragm\'{e}n-Lindel\"{o}f Principles}

In this section, we prove our Phragm\'{e}n-Lindel\"{o}f result for sub-$F$-extremals in a sector where $u$ is bounded above by $C\log^+ |x|$ for some constant $C>0$ and then apply it to get a corresponding result for quasiregular mappings.

\begin{proof}[Proof of Theorem \ref{PLFF}]

Recall that we are assuming that $\limsup\limits_{x\to y} u(x) \leq  C \log^+ |y|$ for all $y\in \partial S$. Suppose that the first alternative in the conclusions of Theorem \ref{PLFF} does not hold. That is, suppose that given $\varepsilon >0$ there exists a sequence $(x_m)_{m=1}^{\infty}$ in $S$ with $|x_m| \to \infty$ and $u(x_m) > (1+\varepsilon) C \log|x_m|$. As $|x_m| \to \infty$, we may re-label and assume that 
\begin{equation}
\label{eq:plff0} 
\frac{u(x_m)}{\log|x_m|}> C(1+\varepsilon) 
\end{equation}
for all $m$.
Given $\delta >0$, define 
\[ C_m=\frac{u(x_m)}{(1+\varepsilon)\log|x_m|} \text{ and } R_m=|x_m|^{1+\varepsilon}e^{-\delta/C_m}-|x_0|. \] 
Clearly we have $C_m > C$ for all $m$. Also, re-labeling $m$ if needed, we have
\begin{align*}
\log(R_m+|x_0|)&=(1+\varepsilon)\log|x_m|-\frac{\delta}{C_m}\\
&=(1+\varepsilon)\log|x_m|-\frac{(1+\varepsilon)\log|x_m|\delta}{u(x_m)}\\
&=(1+\varepsilon)\log|x_m|\left(1-\frac{\delta}{u(x_m)}\right)>0
\end{align*}
for all $m$ since $|x_m|\rightarrow\infty$ and \eqref{eq:plff0} imply that $\log|x_m|>0$ and $u(x_m)\rightarrow\infty$.

Keeping the notation from earlier, let $S_{R_m}=S\cap B(x_0,R_m)$. Finally, define
\[ v_m(x)=u(x)-C_m\log(\left|x_0\right|+R_m). \]
The aim is to show that a suitably scaled version of $v_m$ is bounded above by $F$-harmonic measure on $\partial S_{R_m}$.
Towards that end, for $y \in \partial S_{R_m}\cap \partial S$, we have 
\begin{equation}
\label{eq:plffeq1}
\limsup\limits_{x\to y} v_m(x)= \limsup\limits_{x\to y} \left [ u(x)-C_m\log(\left|x_0\right|+R_m) \right ] \leq \limsup\limits_{x\to y } \left [ u(x)-C\log^+\left|x\right| \right ]\leq 0.
\end{equation}

Moreover, for $y\in \partial S_{R_m}\backslash \partial S$, we have 
\begin{equation}
\label{eq:plffeq2}
v_m(y)=u(y)-C_m\log(\left|x_0\right|+R_m)\leq u(y),
\end{equation}
noting that $u$, and hence $v_m$, are defined on $\partial S_{R_m}\backslash \partial S$.

Consider $\frac{v_m(x)}{M_S(\left|x_0\right|+R_m,u)}$ where as usual $M_S(r,u)=\sup\limits_{x\in S, |x|\leq r}u(x)$. 
It follows from \eqref{eq:plff0} that $u(x_m)>0$, so we have $M_S(|x_0| + R_m,u)>0$. 
For $y\in \partial S_{R_m}\cap \partial S$, it follows from \eqref{eq:plffeq1} that 
\[ \limsup\limits_{x\to y } \frac{v_m(x)}{M_S(\left|x_0\right|+R_m,u)}\leq 0.\]
For $y\in \partial S_{R_m}\backslash \partial S$, it follows from \eqref{eq:plffeq2} that
\[ \frac{v_m(y)}{M_S(\left|x_0\right|+R_m,u)}\leq \frac{u(y)}{M_S(\left|x_0\right|+R_m,u)}\leq 1.\] 

From \cite[Theorem 11.6]{HKM}, we have $$\lim_{x\rightarrow y}\omega(x;R_m)=\begin{cases}
    0, &y\in \partial S_{R_m}\backslash S(x_0,r)\\
    1, &y\in \partial S_{R_m}\backslash \partial S 
\end{cases}.$$ Since $0\leq \omega(x,R_m)\leq 1$ for all $x\in \partial S_{R_m}$, we have $$\limsup\limits_{x\rightarrow y}\frac{v_m(x)}{M_S(\left|x_0\right| + R_m,u)}\leq\liminf\limits_{x\rightarrow y} \omega(x;R_m)$$ for $y\in\partial S_{R_m}.$

As $\omega(x;R_m)$ is an $F$-extremal in $S_{R_m}$, we also have that $\omega(x;R_m)$ is a super-$F$-extremal in $S_{R_m}$ by Lemma \ref{propSubSuper}. 
As $u(x)$ is a sub-$F$-extremal, again by Lemma \ref{propSubSuper}, we have that both $v_m(x)$ and $
\frac{v_m(x)}{M_S(|x_0|+R_m,u)}$ are sub-$F$-extremals in $S_{R_m}$. 

By the $F$-Comparison Principle, Theorem \ref{CompPrinc}, we have 
\[ \frac{v_m(x)}{M_S(\left|x_0\right| + R_m,u)}\leq \omega(x;R_m)\] 
for $x\in S_{R_m}$. In particular, we have
\[ v_m(x_m)\leq M_S(\left|x_0\right| + R_m,u)\omega(x_m;R_m).\]

Next, by the definitions of $C_m$ and $R_m$, for all $m$ we have 
\begin{align*}
v_m(x_m)&=u(x_m)-C_m\log(\left|x_0\right|+R_m)\\&=u(x_m)-C_m\log(\left|x_0\right|+\left|x_m\right|^{1+\varepsilon}e^{-\delta/C_m}-|x_0|)\\
&=u(x_m)-C_m\left [ \log(\left|x_m\right|^{1+\varepsilon})+\log(e^{-\delta/C_m}) \right ]\\
&=u(x_m)-(1+\varepsilon)C_m\log\left|x_m\right|+\delta\\
&=u(x_m)-\frac{(1+\varepsilon)u(x_m)}{(1+\varepsilon)\log|x_m|}\log\left|x_m\right|+\delta\\
&=\delta>0.
\end{align*}

We conclude that for all $m$, we have
\[ 0<\delta\leq M_S(\left|x_0\right| + R_m,u)\omega(x_m;R_m).\]
Using the estimate for $F$-harmonic measure in sectors from Lemma \ref{318}, for all $m$ we get
\begin{equation}
\label{eq:plff3} 
\delta \leq \frac{ 4M_S(|x_0| + R_m,u) |x_m - x_0| ^q }{R_m^q} ,
\end{equation}
where $q=d_n m(E)^{-1/(n-1)}(\alpha/\beta)^{1/n}$ for some constant $d_n>0$ which depends only on $n$. Here, $\alpha$ and $\beta$ are the structural constants of $F$, and $m(E)$ is the angle measure of $S$. 

Set $r_m = |x_0| + R_m$. 
Thus $R_m = r_m(1+o(1))$ as $m\to \infty$. Moreover, from the definition of $R_m$, we have
\begin{align*} |x_m| &= \left [ (R_m + |x_0|)e^{\delta / C_m} \right ] ^{1/(1+\varepsilon)} \\
&= O( r_m^{1/(1+\varepsilon) } )
\end{align*}
as $m\to \infty$. Therefore 
\[ |x_m-x_0|^q = O( r_m^{q/(1+\varepsilon)} ) \]
as $m\to \infty$. 

Combining these with \eqref{eq:plff3}, we conclude that there exists a constant $\lambda >0$ such that
\[ \frac{ M_S(r_m,u)r_m^{q/(1+\varepsilon)} }{r_m^{q} } = M_S(r_m,u)r_m^{-q\varepsilon/(1+\varepsilon) } \geq \lambda \]
for all $m$.
Therefore,
\[ \limsup\limits_{r\to \infty} M_S(r,u) r^{-q'} > 0 \]
with $q' = d_n m(E)^{-1/(n-1)}(\alpha/\beta)^{1/n} \varepsilon / (1+\varepsilon)$.

\end{proof}

We now apply Theorem \ref{PLFF} to the special case of $\log |f|$ when $f$ is quasiregular in $S$.

\begin{proof}[Proof of Corollary \ref{cor:pl}]
Let $F_I(x,h)=|h|^n$ be the variational kernel. Then $u(x)=\log|f(x)|$ is a sub-$f^\sharp F_I$-extremal in $S$ with $\alpha(f^\sharp F_I)=K_O(f)^{-1}$ and $\beta(f^\sharp F_I)=K_I(f)$. By the hypotheses, 
at each $y\in \partial S$, we have
$$\limsup\limits_{x\rightarrow y}u(x)=\limsup\limits_{x\rightarrow y}\log|f(x)|\leq p\log^+|y|+C.$$ 
Therefore,
$$\limsup\limits_{x\rightarrow y}\left [ u(x)- C \right ] \leq p\log^+|y|.$$

As $u(x) - C$ is a sub-$f^{\sharp}F_I$-extremal by Lemma \ref{propSubSuper}, then by Theorem \ref{PLFF} given $\varepsilon>0$ we obtain either
$u(x)- C\leq (1+\varepsilon) p \log|x|$ in $S$ for $|x|$ sufficiently large or there exists a constant $q=d_n m(E)^{-1/(n-1)}(\alpha/\beta)^{1/n} \varepsilon / (1+\varepsilon)>0$ such that $\limsup\limits_{r\rightarrow\infty}M_S(r,u)r^{-q}>0$. Note that $\alpha/\beta>K^{-2}$, so $$\limsup\limits_{r\rightarrow\infty}M_S(r,u)r^{-q'}>\limsup\limits_{r\rightarrow\infty}M_S(r,u)r^{-q}>0$$ where $q'={d_n m(E)^{-1/(n-1)}K^{-2/n}}\varepsilon / (1+\varepsilon).$

In the first case, $u(x)\leq C+(1+\varepsilon)p\log |x|$ in $S$ for $|x|$ sufficiently large.
In the second case, if $\limsup\limits_{r\rightarrow\infty} M_S(r,u)r^{-q'} >0$ then as
\[ M_S(r,u) = M_S(r , \log |f|) = \log M_S(r,f)\]
we conclude that
\[ \limsup\limits_{r\to\infty} [\log M_S(r,f)] r^{-q'} >0 .\]
\end{proof}

\section{Neighborhoods of Components of Closed Sets}
In the proof of Theorem \ref{Lf}, we will need to show that every component $E$ of $L(f)$ has an open neighborhood $U$ which is not too much larger than $E$ and has the property that the boundary of $U$ is contained in the complement of $L(f)$. We will do this in Lemma \ref{toplemma}, and to build up to its proof, we will introduce some topological notions here that are not required anywhere else in this paper. For more details and proofs, we refer to Bredon's monograph \cite{Bredon}.

\begin{definition}
    Let $X$ be a topological space. Define $x\sim y$ if $X$ cannot be written as a disjoint union of open sets $U$,$V$ containing $x,y$ respectively. Then $\sim$ is an equivalence relation, and equivalence classes are quasicomponents of $X$.
\end{definition}

\begin{theorem}\label{F1}
    The quasicomponent containing $x\in X$ is the intersection of closed-and-open subsets of $X$ containing $x$.
\end{theorem}

\begin{theorem}\label{F2}
    In compact Hausdorff spaces, components and quasicomponents coincide. 
\end{theorem}

\begin{definition}
    A space $X$ is $T_4$ if for any two disjoint closed sets $F,G$ in $X$, there exist disjoint open sets $U,V$ in $X$ with $F\subset U$ and $G\subset V$.
\end{definition}

\begin{theorem}\label{F3}
    A set $A$ is measurable if and only if for every $\epsilon>0$ there exist an open set $G$ and a closed set $H$ such that $H\subset A\subset G$ and $m(G\backslash H)<\epsilon$.
\end{theorem}

\begin{lemma}
    \label{other lemma} Let $X$ be a compact Hausdorff space. Let $E$ be a quasicomponent of $X$, and let $V$ be an open neighborhood of $E$ in $X$. Then there exists a closed-and-open set $C$ in $X$ such that $E\subset C\subset V$.
\end{lemma}
\begin{proof}
Since $V$ is open in $X$, $X\backslash V$ is closed in $X$. Since $X\backslash V$ is closed in a closed Hausdorff space, then $X\backslash V$ is compact. 

    Let $x\in E$ and let $Y_\gamma$ be closed-and-open sets in $X$ containing $x$. Then, $\cap_\gamma Y_\gamma=E$ by Theorem \ref{F1}. Since $Y_\gamma$ is closed in $X$, then $X\backslash Y_\gamma$ is open in $X$. Therefore, $\cup_\gamma (X\backslash Y_\gamma)$ is open. Let $y\in X\backslash V$. Then $y\not\in E$, so $y\not\in Y_\gamma$ for some $\gamma$. Hence $y\in \cup_\gamma(X\backslash Y_\gamma)$, so $X\backslash V\subset \cup_\gamma(X\backslash Y_\gamma)$. As $\{X\backslash Y_\gamma\}_\gamma$ is an open cover of $X\backslash V$, there is a finite subcover $\{X\backslash Y_i\}_{i=1}^m$ such that $X\backslash V\subset \cup_{i=1}^m (X\backslash Y_i)$. 

    Since $Y_i$ is closed-and-open for all $i$, $\cup_{i=1}^m(X\backslash Y_i)$ is closed-and-open in $X$, so $$X\backslash (\cup_{i=1}^m(X\backslash Y_i))$$ is closed-and-open in $X$. Let $x\in E$. Then, $x\in Y_\gamma$ for all $\gamma$. In particular, $x\not\in X\backslash Y_i$ for all $i$, so $x\not\in \cup_{i=1}^m (X\backslash Y_i)$. Hence $E\subset X\backslash (\cup_{i=1}^m (X\backslash Y_i))$. Finally, since $\cup_{i=1}^m (X\backslash Y_i)$ is an open cover of $X\backslash V$, then $X\backslash (\cup_{i=1}^m (X\backslash Y_i))\subset V$.
\end{proof}

\begin{lemma}\label{toplemma}
    Let $X\subset S^{n-1}$ be closed, and let $E$ be a component of $X$ with $m(E)<\lambda<\omega_{n-1}$. Given $\epsilon>0$, there exists a neighborhood $U$ of $E$ with $m(U)<\lambda+\epsilon<\omega_{n-1}$ and $\partial U\subset S^{n-1}\backslash X$.
\end{lemma}

\begin{proof}
    Since $E$ is closed in $S^{n-1}$, $E$ is a measurable set. By Theorem \ref{F3}, there exist an open set $V_1\subset S^{n-1}$ and a closed set $V_2\subset S^{n-1}$ such that $V_2\subset E\subset V_1$ and $m(V_1\backslash V_2)<\epsilon$. Note that this implies that $m(V_1)=m(V_2\cup (V_1\backslash V_2))=m(V_2)+m(V_1\backslash V_2)\leq m(E)+m(V_1\backslash V_2)<\lambda+\epsilon$. If $X\cap \partial V_1=\emptyset$, we are done.

    Suppose $X\cap \partial V_1\neq \emptyset$. Note that $V_1$ is open in $S^{n-1}$, so $X\cap V_1$ is open in $X$ and that $E\subset X\cap V_1$. By Lemma \ref{other lemma}, there exists a closed-and-open set $C_1$ in $X$ with $E\subset C_1\subset X\cap V_1$. Since $C_1$ is closed in $X$ and $X$ is closed in $S^{n-1}$, then $C_1$ is closed in $S^{n-1}$. Since $C_1$ is also open in $X$, $X\backslash C_1$ is closed in $X$ and hence in $S^{n-1}$. In addition, since $V_1$ is open in $S^{n-1}$, then $S^{n-1}\backslash V_1$ is closed in $S^{n-1}$. Therefore, $C_2=(S^{n-1}\backslash V_1)\cup (X\backslash C_1)$ is closed in $S^{n-1}$. 
    
    Note that $C_1\cap C_2=\emptyset$. To see this, suppose that $x\in C_2$. Then, either $x\in S^{n-1}\backslash V_1$ or $x\in X\backslash C_1$. In the first case, since $C_1\subset V_1$, $x$ cannot be in $C_1$. In the second case, $x$ cannot be in $C_1$. Hence, $C_1\cap C_2=\emptyset$. Since $S^{n-1}$ is $T_4$, there exist disjoint open sets $U_1, U_2$ in $S^{n-1}$ with $C_1\subset U_1$ and $C_2\subset U_2$. 

    Note that $U_1\subset V_1$. To see this, let $x\in U_1$. Then, $x\not\in U_2$, so $x\not\in C_2$. Therefore, $x\not\in X\backslash C_1$ and $x\not\in S^{n-1}\backslash V_1$. Hence, $x\in V_1$. Therefore, $m(U_1)\leq m(V_1)<\lambda+\epsilon$.

    Finally, note that $\partial U_1\cap X=\emptyset$. To see this, let $x\in X=C_1\cup (X\backslash C_1)$. If $x\in C_1$, then $x\subset U_1$ and $x\not\in \partial U_1$. If $x\in X\backslash C_1$, then $x\in U_2$. Since $U_2$ is open, there exists a neighborhood of $x$ contained in $U_2$. If $x\in \partial U_1$, any neighborhood of $x$ contains a point in $U_1$. However $U_1\cap U_2=\emptyset$, so this cannot happen. Therefore, $\partial U_1\cap X=\emptyset$.
    \end{proof}

\section{Proof of Main Results}

In this section, we return to the set of Julia limiting directions for a quasiregular mapping and prove Theorem \ref{Lf}, Corollary \ref{cor:3}, and Theorem \ref{inverse}.
Our aim is to generalize the method of Qiao \cite[Theorem 1]{Qiao} using our Phragm\'en-Lindel\"of result to overcome technical difficulties in higher dimensions. We first need the following lemma. Recall that $\Omega(x_0,\zeta, \eta)$ is the sector $S_{x_0,E}$ where $E$ is the ball $\{ y \in S^{n-1} : d(y,\zeta) < \eta) \}$.

\begin{lemma}
\label{lem:qf}
Suppose $f:\R^n \to \R^n$ is a transcendental-type quasiregular mapping.
Let $\zeta \in S^{n-1} \setminus L(f)$. Then there exist $k>0$ and $\eta >0$ such that the sector $\Omega (k \zeta, \zeta, \eta) \subset QF(f)$.
\end{lemma}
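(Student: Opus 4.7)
The plan is to directly unpack the negation of the definition of Julia limiting direction and then fit a sector of the required type inside the resulting subset of $QF(f)$. Since $\zeta \notin L(f)$, no sequence $(x_m) \subset J(f)$ with $|x_m| \to \infty$ can have $x_m/|x_m| \to \zeta$; taking contrapositives, there must exist $\eta_0 > 0$ and $R_0 > 0$ such that the open truncated cone
\[ T = \{ x \in \R^n : |x| > R_0 \text{ and } d(x/|x|, \zeta) < \eta_0 \} \]
contains no point of $J(f)$, i.e., $T \subset QF(f)$.

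The next step is to choose $k$ and $\eta$ so that $\Omega(k\zeta, \zeta, \eta) \subset T$; I would take $k > R_0$ and any $\eta \in (0, \min(\eta_0, \pi/2))$. A typical point of $\Omega(k\zeta, \zeta, \eta)$ has the form $z = k\zeta + r\xi$ with $r > 0$ and $\xi \in S^{n-1}$ satisfying $d(\xi, \zeta) < \eta$. Writing $\alpha = \zeta \cdot \xi = \cos d(\xi, \zeta) \in (\cos\eta, 1]$, one has
\[ |z|^2 = k^2 + 2kr\alpha + r^2 \geq k^2 > R_0^2, \]
so the radial requirement $|z| > R_0$ is immediate. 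For the angular condition, the computation
\[ \frac{(\zeta \cdot z)^2}{|z|^2} = \frac{(k + r\alpha)^2}{k^2 + 2kr\alpha + r^2} = 1 - \frac{r^2(1-\alpha^2)}{k^2 + 2kr\alpha + r^2} \geq \alpha^2, \]
with strict inequality unless $\alpha = 1$, yields $d(z/|z|, \zeta) \leq d(\xi, \zeta) < \eta \leq \eta_0$, and hence $z \in T$.

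I do not anticipate a genuine obstacle. The substantive content is the elementary convex-geometric fact that the direction of $k\zeta + r\xi$ viewed from the origin lies, after normalization, in the $2$-plane spanned by $\zeta$ and $\xi$ and makes an angle with $\zeta$ no greater than the angle between $\xi$ and $\zeta$. Equivalently, pushing the vertex of a narrow cone outward along its axis keeps the cone inside the corresponding truncated cone based at the origin, which is precisely what the lemma requires.
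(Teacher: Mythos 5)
Your proof is correct and takes essentially the same route as the paper's: both arguments boil down to the contrapositive of the definition of a Julia limiting direction combined with the elementary geometric fact that a cone of half-angle $\eta < \pi/2$ with vertex $k\zeta$ ($k>0$) lies, seen from the origin, within angle $\eta$ of $\zeta$ and outside $B(0,k)$. The only difference is cosmetic: you carry out the inner-product computation behind the inclusion explicitly, whereas the paper asserts it in the form $\nu(\Omega(k\zeta,\zeta,\eta)) \subset S^{n-1} \cap B(\zeta,\eta)$ and runs the contradiction with shrinking sectors $\Omega(k_m\zeta,\zeta,1/m)$ instead of a single truncated cone.
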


\begin{proof}
Suppose the conclusion does not hold. Then for all $k>0$ and all $\eta >0$, we have
\[ \Omega ( k\zeta, \zeta, \eta) \cap J(f) \neq \emptyset.\]
Note that $\nu_{0}(\Omega(k\zeta ,\zeta,\eta))\subset S^{n-1}\cap B(\zeta, \eta)$. 
Set $\eta_m = 1/m$ and choose sequences $k_m \to \infty$ and $z_m \in \Omega ( k_m \zeta, \zeta, \eta_m) \cap J(f)$. We must necessarily have $|z_m| \to \infty$ as $m\to \infty$.

For all $\epsilon >0$, there exists $M \in \N$ such that $\eta_m < \epsilon$ for $m\geq M$. Then 
\[ \nu_0(z_m) \in \nu_0 \left [ \Omega ( k_m \zeta, \zeta, \eta_m ) \right ]\subset B(\zeta , \epsilon ).\]
As this holds for all $\epsilon >0$, we must have $\nu_0(z_m) \to \zeta$ as $m\to \infty$. This implies that $\zeta \in L(f)$, which is a contradiction.
\end{proof}

\begin{proof}[Proof of Theorem \ref{Lf}]
Let $f$ be a $K$-quasiregular mapping of transcendental-type of order $\mu_f$, and let $$M=\min(c_nK^{(2-2n)/n}\mu_f^{1-n},\omega_{n-1})=\min\left(\left[(n-1)d_nK^{-2/n}\mu_f^{-1}/2\right]^{n-1},\omega_{n-1}\right)$$ where $d_n$ is the constant depending on $n$ from Corollary \ref{cor:pl} and $c_n=[(n-1)d_n/2]^{n-1}$. Assume towards a contradiction that the component of $L(f)$ with the largest measure has measure less than $M$.

Let $\mathcal{F} =\{U : U \text{ is a domain in }S^{n-1} \text{ such that } \partial U\subset S^{n-1}\backslash L(f) \text{ and }m(U)<M\}.$
Since the component with the largest measure of $L(f)$ has measure less than $M$, then by Lemma \ref{toplemma}, $\mathcal{F}$ is an open cover of $S^{n-1}$. Since $S^{n-1}$ is compact, there exists a finite subcover $\bigcup_{i=1}^s U_i$ of $S^{n-1}$, for $U_1,\ldots, U_s \in \mathcal{F}$. 

Now, fix $i\in \{1,2,...,s\}$ and consider $U_i$. Let $S_i$ be the sector $S_{0,U_i}$.
Given $y\in \partial U_i$, by Lemma \ref{lem:qf} there exist $k_y>0$ and $\eta_y >0$ such that $\Omega(k_yy, y, \eta_y) \subset QF(f)$.
By Theorem \ref{sectorLf}, there exist constants $\eta_y'\in (0,\eta_y)$ and $p_y >0$ such that $|f(x)|=O(|x|^{p_{y}})$ for $x\in \Omega(k_yy,y,\eta_{y}')$.

Define $V_y=\nu_0[\Omega(k_yy,y,\eta_y')]\subset S^{n-1}$, and consider $\bigcup_{y\in \partial U_i} V_y$. This is an open cover of $\partial U_i$. By compactness, there exists a finite subcover $\bigcup_{j=1}^t V_{y_j}$ that covers $\partial U_i$. Set $p_i=\max\limits_{j=1,\ldots, t}p_{y_j}$.

Next, for $x\in \partial S_i$, with $|x|>R$ for $R$ sufficiently large, we have $x\in \Omega(k_{y_j}y_j,y_j,\eta_{y_j}')$ for some $y_j$, and $\nu_0(x)\in V_{y_j}$. Therefore,
\[ x \in \bigcup_{j=1}^t S_{k_{y_j}y_j,V_{y_j}}\] where $S_{k_{y_j}y_j,V_{y_j}}$ is the sector with vertex $k_{y_j}y_j$ with respect to $V_{y_j}$. 
Hence, as $|x|\to \infty$ in $\partial S_i$, we have $|f(x)| = O(|x|^{p_i})$. Moreover, as $\overline { \partial S_i \cap B(0,R) }$ is compact and $f$ is continuous on this set, $|f|$ is bounded there. We may therefore apply Corollary \ref{cor:pl} to $f$ on $S_i$ with $\varepsilon =1$ to conclude that there either there exists $p_i' >0$ so that $|f(x)| = O(|x|^{p_i'} )$ as $|x| \to \infty$ in $S_i$, or 
\[ \limsup\limits_{r\to\infty} \log M_{S_i}(r,f) r ^{-q_i} >0,\] 
where $q_i=d_n m(U_i)^{-1/(n-1)}K^{-2/n}/2$ for $d_n>0$ depending only on $n$.

Suppose this latter case holds. Then, in particular, there exist a constant $\lambda >0$ and a sequence $(r_m)_{m=1}^{\infty}$ with $r_m \to \infty$ such that
\[ \log M_{S_i}(r_m,f)>\lambda r_m^{q_i}.\]

Then the order of $f$ satisfies
\begin{align}
\mu_f=&\limsup\limits_{r\rightarrow\infty}\: (n-1)\frac{\log\log M(r,f)}{\log r}\nonumber\\
&\geq \limsup\limits_{m\to \infty} \: (n-1) \frac{ \log \log M(r_m,f)}{\log r_m}\nonumber\\
&\geq \limsup\limits_{m\to \infty} \: (n-1) \frac{ \log \log M_{S_i}(r_m,f)}{\log r_m}\nonumber\\
&\geq \limsup\limits_{m\to \infty} \: (n-1)\frac{\log(\lambda r_m^{q_i})}{\log r_m}\nonumber\\
&= \limsup\limits_{m\to \infty} \: (n-1)\frac{\log \lambda+q_i\log r_m}{\log r_m}\nonumber\\
&= (n-1)q_i \nonumber\\
&= (n-1)d_n m(U_i)^{-1/(n-1)}K^{-2/n}/2. \label{eq:inproof}
\end{align}

Note that 
$$M=\begin{cases}
 \omega_{n-1}, & \text{if }\mu_f \leq K^{-2/n} \left ( \frac{ c_n}{\omega_{n-1} } \right )^{1/(n-1)}\\
 \left((n-1)d_nK^{-2/n}\mu_f^{-1}/2\right)^{n-1}, & \text{if }\mu_f > K^{-2/n} \left ( \frac{ c_n}{\omega_{n-1} } \right )^{1/(n-1)}
\end{cases}.$$

The inequality \eqref{eq:inproof} implies that $$m(U_i) \geq\left((n-1)d_nK^{-2/n}\mu_f^{-1}/2\right)^{n-1}\geq M,$$ which contradicts $m(U_i)<M$ and rules out the second alternative in the Phragm\'en-Lindel\"of result. 

We conclude that $|f(x)|=O(|x|^{p_i'})$ as $|x|\to \infty$ in $S_i$. Since this holds for all $i=1,...,s$ and $\bigcup_{i=1}^s U_i$ is a cover of $S^{n-1}$, we have $|f(x)|=O(|x|^p)$ as $|x| \to \infty$ in $\R^n$ for some $p>0$, which contradicts Lemma \ref{lem:berg}. Hence, there must be a component of $L(f)$, say $E$, with $m(E)\geq \min(c_n K^{(2-2n)/n}\mu_f^{1-n},\omega_{n-1})$.
\end{proof}

\begin{proof}[Proof of Corollary \ref{cor:3}]
    Suppose that every component of $L(f)$ is a point. Let $V_\gamma=\{x_\gamma\}$ be a component of $L(f)$ and $\epsilon>0$. By Lemma \ref{toplemma}, for each component $V_\gamma$, there exists a neighborhood $U_\gamma\subset S^{n-1}$ of $V_\gamma$ with $m(U_\gamma)=\epsilon$ and $\partial U_\gamma\cap L(f)=\emptyset$. From Theorem \ref{sectorLf} in $\R^n\backslash (\cup_\gamma S_{0,U_\gamma})$, we have $|f(x)|=O(|x|^p)$ for some constant $p>0$. In particular, along $\partial S_{0,U_\gamma}$, $|f(x)|=O(|x|^p)$. By Corollary \ref{cor:pl}, in $S_{0,U_\gamma}$ we either have $|f(x)|=O(|x|^{p'})$ for some constant $p'>0$ or \begin{equation}
 \mu_f\geq (n-1)d_n \epsilon^{\frac{-1}{n-1}}K^{-2/n}/2 \label{notnamedyet}
\end{equation} where $d_n>0$ depends only on $n$. In the first case, $f$ is forced to be of polynomial-type by Lemma \ref{lem:berg}, so the order must satisfy \eqref{notnamedyet}. Letting $\epsilon\rightarrow 0$, we have $\mu_f=\infty$. 
\end{proof}

Turning to the inverse problem for quasiregular mappings, the first named author gave a partial answer to the inverse problem in $\R^3$ \cite[Theorem 2.4]{Fletcher}. In \cite{Fletcher}, he considered the case when $E$ consists of the union of the closures of finitely many domains in $S^2$ and showed that there is a quasiregular mapping of finite lower order such that $E=L(f)$. 
This result relied on a construction by Nicks and Sixsmith \cite{NS} which gives a quasiregular mapping of transcendental-type equal to the identity in a half-space. By looking at quasiconformal conjugates of this map, the first named author gave an answer to the inverse problem when $E$ satisfies certain conditions. 

To fully answer the inverse problem, we would first like to construct a quasiregular mapping of transcendental-type with one Julia limiting direction that is equal to the identity outside a half-beam. 
To construct this map, we would use a similar strategy as Nicks and Sixsmith. If we could construct such a map, we could fully answer the inverse problem.

\begin{proof}[Proof of Theorem \ref{inverse}]
    Given a closed set $E\subset S^{n-1}$, we define a quasiregular mapping $f$ inductively as follows.

Let $(\zeta_{1,k})_k$ be a sequence of points in $E$ with $d(\zeta_{1,i},\zeta_{1,j})>1/2$ for $i\neq j$. 
Pick a sequence of points $(x_{1,k})_k$ such that the half-beams $H(x_{1,k},\zeta_{1,k},1)$ are pairwise disjoint. Let $A_{1,k}\colon \R^n\rightarrow\R^n$ be the composition of a translation by $x_{1,k}$ and a rotation that sends $e_1$ to $\zeta_{1,k}$. Note that $A_{1,k}(H(0,e_1,1))=H(x_{1,k},\zeta_{1,k},1)$.
Define $$f_1(x)=\begin{cases}
 A_{1,k}\circ f_0\circ A^{-1}_{1,k}(x), & x\in \cup_k H(x_{1,k},\zeta_{1,k},1)\\
 x, & \text{otherwise}.
\end{cases}$$ Note that this map is quasiregular and $L(f_1)=\{(\zeta_{1,k})_k\}$.

Next, for $m\geq 2,$ consider a sequence of points $(\zeta_{m,k})_k$ in $E$ such that $L(f_{m-1})\subset\{(\zeta_{m,k})_k\}$ and $d(\zeta_{m,i},\zeta_{m,j})>1/2^m$. For each $\zeta_{m,k}\not\in L(f_{m-1})$, pick a point $x_{m,k}$ such that $H(x_{m,k},\zeta_{m,k},1)$ is disjoint from previously defined half-beams $$\bigcup\limits_{h<m}\bigcup\limits_k H(x_{h,k},\zeta_{h,k},1)$$ and $H(x_{m,i},\zeta_{m,i},1)\cap H(x_{m,j},\zeta_{m,j},1)=\emptyset$ when $i\neq j$. Let $A_{m,k}\colon \R^n\rightarrow\R^n$ be the composition of a translation by $x_{m,k}$ and a rotation that sends $e_1$ to $\zeta_{m,k}$. Note that $A_{m,k}(H(0,e_1,1))=H(x_{m,k},\zeta_{m,k},1)$. Define
$$f_m(x)=\begin{cases}
 A_{m,k}\circ f_0\circ A_{m,k}^{-1}(x), & x\in \cup_k H(x_{m,k},\zeta_{m,k},1)\\
 f_{m-1}(x), &\text{otherwise}.
\end{cases}$$
Note that $f_m$ is a quasiregular mapping with $L(f_m)=\{(\zeta_{m,k})_k\}$.

Since $E$ is closed, as $m\rightarrow\infty$, the set of accumulation points of $\{(\zeta_{m,k})\}$ equals $E$. Note that $K(f_m)= K(f_0)$ for $m\geq 1$ since translations and rotations are 1-quasiregular. Moreover, $(f_m)_m$ converges locally uniformly to a map $f$. By \cite[Theorem VI.8.6]{Rickman}, $f$ is a quasiregular map.
Therefore, we have a quasiregular mapping $f$ with $L(f)=E$.

\end{proof}

\bibliographystyle{apalike}
\bibliography{references}

\end{document}